%
%
%

\documentclass[
final
]{dmtcs-episciences}


\usepackage[utf8]{inputenc}
\usepackage{subfigure}
\usepackage{graphicx}
\usepackage{pstricks}
\usepackage{pstricks,pst-node}

\newtheorem{teo}{Theorem}
\newtheorem{coro}{Corollary}
\newtheorem{lem}{Lemma}
\newtheorem{rem}{Remark}
\newtheorem{claim}{Claim}
\newenvironment{proof}{{\bf Proof\ }}{\hfill $\BBox$ \smallskip}
\newenvironment{proofi}{{\bf {Proof of Claim 1.\ }}}{\hfill $\Box$ \smallskip}

%

\usepackage[round]{natbib}

\author{Hortensia Galeana-S\'{a}nchez\affiliationmark{1}\thanks{Research supported by CONACyT, M\'exico under proyect $219840$ and UNAM-DGAPA-PAPIIT IN104717.}
  \and Mika Olsen\affiliationmark{2}\thanks{Research supported by CONACyT, M\'exico under proyect $222104$.}}
\title[Solving the kernel perfect problem for digraphs in some families of generalized tournaments]{Solving the kernel perfect problem by (simple) forbidden subdigraphs for digraphs in some families of generalized tournaments and generalized bipartite tournaments}
\affiliation{
  UNAM, M\'exico\\ 
  UAM-Cuajimalpa, M\'exico}
\keywords{kernel, perfect graph, kernel perfect digraph, locally in-/out-semicomplete digraph, asymmetric arc-locally in-/out-semicomplete digraph, asymmetric $3$-(anti)-quasi-transitive digraph}
\received{2017-12-04}
\revised{2018-7-5, 2018-11-5}
\accepted{2018-11-6}
\begin{document}
\publicationdetails{20}{2018}{2}{16}{4122}
\maketitle
\begin{abstract}
A digraph such that every proper induced subdigraph has a kernel is said to be \emph{kernel perfect} (KP for short) (\emph{critical kernel imperfect} (CKI for short) resp.) if the digraph has  a kernel (does not have a kernel resp.).
The unique CKI-tournament is $\overrightarrow{C}_3$ and the unique KP-tournaments are the transitive tournaments, however bipartite tournaments are KP. 
In this paper we characterize the CKI- and KP-digraphs for the following families of digraphs: locally in-/out-semicomplete, asymmetric arc-locally in-/out-semicomplete, asymmetric $3$-quasi-transitive and asymmetric $3$-anti-quasi-transitive $TT_3$-free and we state that the problem of determining whether a digraph of one of these families is CKI is polynomial, giving a solution to a problem closely related to the following conjecture posted by Bang-Jensen in 1998: the kernel problem is polynomially solvable for locally in-semicomplete digraphs. 
\end{abstract}



\section{Introduction}
A family which is a generalization of tournaments is a family of digraphs that in some way preserves basic structures of the tournaments, an interesting survey of generalizations of tournaments can be found in \cite{bang1998generalizations}. Generalizations of tournaments have been widely studied, more than 300 papers have been published in this topic and this has improved the understanding of topics such as hamiltonicity, domination and pancyclicity, properties that digraphs of  some families of generalized tournaments preserve, see \cite{bang1998generalizations, bang1995quasi}. The locally semicomplete digraphs introduced 
in \cite{bang1990locally} are among the families which have been most studied. Locally in-semicomplete digraphs and locally out-semicomplete digraphs are also generalizations of tournaments.
A digraph $D$ is \emph{locally in-semicomplete} (resp. \emph{locally out-semicomplete}) if for any vertex  $v\in V(D)$, the in-neighborhood (resp. out-neighborhood) induces a semicomplete digraph in $D$. A digraph $D$ is \emph{locally semicomplete}  if it is both locally in-semicomplete and locally out-semicomplete. Observe that locally semicomplete digraphs are locally in-semicomplete and locally out-semicomplete, but the converse is not true (see Figure \ref{Fig locally-in}), 
\begin{figure}[h!]
\centering
\begin{pspicture}(4.5,1.4)
        \psset{unit=.6, nodesep=3pt}     
        \cnode*(4.7,0){1.7pt}{10}
        \cnode*(4.7,2){1.7pt}{11} 
        \cnode*(.5,0){1.7pt}{00}
        \cnode*(.5,2){1.7pt}{01}
        \cnode*(2.5,1){1.7pt}{1}
        \ncline{->}{00}{01}
        \ncline{->}{00}{1}
        \ncline{->}{01}{1}
        \ncline{->}{1}{10}
        \ncline{->}{1}{11}
\end{pspicture}
\caption{A locally-in semicomplete digraph which is not a locally semicomplete digraph.}
\label{Fig locally-in}
\end{figure}
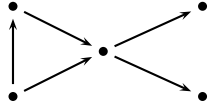
the class of locally in-semicomplete digraphs (resp. {locally out-semicomplete}) is a quite wider class than the class of locally semicomplete digraphs.
The  arc-locally semicomplete digraphs was defined in \cite{bang2004structure}, this definition is somehow close related to the definition of locally semicomplete digraphs, although they are generalizations of bipartite tournaments and it is surprising so few results there are on this family, properties such as hamiltonicity and pancyclism are known.
A family of digraphs is a \emph{family of generalized bipartite tournaments} if the digraphs preserve in some way basic structures of bipartite tournaments.
The arc-locally in-semicomplete, arc-locally out-semicomplete, $3$-quasi-transitive and $3$-anti-quasi-transitive digraphs are families of generalized bipartite tournaments.
A digraph $D$ is \emph{arc-locally in-semicomplete} (\emph{arc-locally out-semicomplete}, resp.) if, for every arc $uv\in D$ and every pair of vertices $x,y$ such that $x\in N^-(u)$ and $y\in N^-(v)$ ($x\in N^+(u)$ and $y\in N^+(v)$, resp.),  $x,y$ are adjacent. A digraph $D$ is \emph{arc-locally semicomplete} if $D$ is arc-locally in-semicomplete and arc-locally out-semicomplete.
In \cite{bang2004structure}, a digraph $D$ was defined as \emph{$3$-quasi-transitive} (\emph{$3$-anti-quasi-transitive} resp.) if, for every directed (anti-directed resp.) $4$-path $uvwx$, $u$ and $x$ are adjacent.

Kernels are an important topic in the theory of digraphs, they where introduced in \cite{neumann1944theory} and has received lot of attention due to its theoretical interest as well as its many applications in different areas such as game theory, argumentation theory, logic, logic programming and artificial intelligence, see \cite{le2000counterexamples, creignou1995class, dimopoulos1996graph, dung1995acceptability}. A \emph{kernel} in a digraph is an independent and absorbent set of vertices (a subset of vertices $S$ of a digraph $D$ is an absorbent set if for every vertex $u\in V(D)\setminus S$ there is a vertex $v\in S$ such that $uv\in A(D)$), for surveys of kernels see \cite{bang1998generalizations, haynes1998domination}. The problem of deciding  whether a digraph D has a kernel is $\mathcal{NP}$-complete, see \cite{chvatal1973computational}, even for planar digraphs with in-degree and out-degree at least 1 and degree at most three, see \cite{fraenkel1981planar}.
Due to the difficulty of this topic, the study of kernels is centered in sufficient conditions to asure the existence of kernels and the study of fixed classes of digraphs having a kernel or fixed classes of kernel perfect digraphs. The existence of  kernels in digraphs with a perfect underlying graph has been studied in \cite{galeanarojas2006kernels}; the existence of kernels in locally in-/out-semicomplete digraphs has been studied in \cite{galeana1995b1, galeana1997characterization};  and the existence of kernels in arc-locally in-/out-semicomplete digraphs has been studied in \cite{galeana2006kernels}.
The Strong Perfect Graph Conjecture,  stated by C. Berge in 1960 and proved in \cite{chudnovsky2006strong} states that a graph $G$ is perfect if and only if $G$ contains neither $C_{2n+1}$ nor the complement of $C_{2n+1}$, $n\geq 2$, as an induced subgraph.
Many authors have contributed to obtain nice properties and interesting characterizations of Perfect Graphs, see \cite{chvatal1984topics,ramirez2001perfect}. 
The \emph{underlying graph} $G_D$ of a digraph $D$ is the graph on the vertex set $V(G_D)=V(D)$ whith $\{u,v\}\in E(G_D)$ if and only if $\{uv,vu\}\cap A(D)\neq \emptyset$.
The digraphs of some families of generalized tournaments have a perfect graph as underlying graph, for instance \emph{quasi-transitive digraphs} (for every directed path $uvw$ there is an arc between $u$ and $w$) and \emph{semicomplete multipartite tournaments} (the underlying graph is a complete multipartite graph).

A digraph such that every proper induced subdigraph has a kernel is said to be a \emph{kernel perfect digraph} (KP-digraph) (\emph{critical kernel imperfect digraph} (CKI-digraph) resp.) if the digraph $D$ has  a kernel (does not have a kernel resp.).
In  \cite{berge1990recent} it was conjectured that a graph $G$ is perfect if and only if any orientation by sinks of $G$ is a kernel perfect digraph, the authors considered orientations of $G$ by directing each edge of $G$ in at least one of the two possible directions. An orientation of $G$ is an \emph{orientation by sinks} (or normal) if every induced semicomplete subgraph $H$ of $G$ has an absorbing vertex in $H$ (a vertex $v\in V(H)$ such that $uv\in A(H)$ for every vertex $u\in V(H)\setminus \{v\}$).
This conjecture was proved in \cite{berge1990recent} and in \cite{boros1996perfect}, and it constructs an important bridge between two topics in graph theory: namely colorings and kernels.
It is important to stress that a digraph $D$ without induced CKI-digraph is a KP-digraph and so, it does have a kernel. Although there are digraphs with kernels which are not KP.  Hence, another tool to decide whether a digraph has a kernel is characterizing the CKI-digraphs; due to a result of the first author and Neumann-Lara \cite{galeana1991extending} CKI-digraphs cannot be characterized by forbidden minors. For structural properties of CKI-digraphs see \cite{balbuena2014structural, galeana2016some, galeana1991extending, galeana2016cki} and for characterizations of families of CKI-digraphs see \cite{galeana2016cki,galeana2016characterization}.
The unique tournament which is CKI is $\overrightarrow{C}_3$ and the unique KP-tournaments are the transitive tournaments, however any induced subdigraph of a bipartite tournament does have a kernel. Hence, bipartite tournaments are KP. In \cite{galeana1986kernel} it was proved that the semicomplete CKI-digraphs are $\overrightarrow{C}_3$ and the family $D\cong \overrightarrow{C}_n(1,\pm2,\pm3,\dots,\pm\lfloor \frac{n}{2}\rfloor)$, for some $n\ge4$, where  $\overrightarrow{C}_m(J)$ is a \emph{circulant }(or \emph{rotational) digraph} defined by $V(\overrightarrow{C}_m(J))=\mathbb{Z}_m$ and $A(\overrightarrow{C}_m(J))=\{ (i,j):i,j\in \mathbb{Z}_m,j-i\in J\} $, with  $\mathbb{Z}_m$ as the cyclic group of integers modulo $m$ $(m\geq 2)$ and $J$ is a nonempty subset of $\mathbb{Z}_m\setminus \{0\}$.
 In \cite{galeana2016characterization} the authors characterized the locally semicomplete CKI-digraphs as directed odd cycles, $\overrightarrow{C}_7(1,2)$ and $D\cong \overrightarrow{C}_{m}(1,\pm 2,\pm 3,\dots,\pm \lfloor \frac{n}{2} \rfloor )$, for $n\ge4$. 

We  characterize the CKI-digraphs  having a perfect graph as underlying graph, using the relation between perfect graphs and kernel perfect graphs \cite{berge1990recent,boros1996perfect, galeana2012new} and we characterize the locally in-semicomplete digraphs and the locally out-semicomplete digraphs which are CKI-/KP-digraphs. It is important to stress that the property of being CKI is not preserved for the converse digraph of a CKI digraph, see \cite{duchet1981note}, the \emph{converse} digraph $D^{-1}$ of a digraph $D$ is obtained by reversing the arcs of $D$. Hence, characterizing the locally in-semicomplete digraphs and the locally out-semicomplete digraphs which are  CKI-/KP-digraphs is not the same problem, 
although for these two families the families of CKI-/KP-digraphs remains the same. 
Finally, we characterize the asymmetric CKI-/KP-digraphs which are arc-locally in-semicomplete, arc-locally out-semicomplete or $3$-anti-quasi-transitive $TT_3$-free digraphs as the directed odd cycles and the asymmetric $3$-quasi-transitive CKI-digraphs as $\overrightarrow{C}_3$.
Moreover, we state that the problem of determining whether a digraph is CKI is polynomial for digraphs of the following families of digraphs: locally in-/out-semicomplete, asymmetric arc-locally in-/out-semicomplete, $3$-quasi-transitive and asymmetric $3$-anti-quasi-transitive $TT_3$-free. Hence, we give a solution to a problem closely related to the following conjecture posted  in \cite{bang1998generalizations}: the kernel problem is polynomially solvable for locally in-semicomplete digraphs.


\section{Definitions and prelimiaries}
For general concepts and notation we refer the reader to \cite{bang2008digraphs}.
The paths and cycles considered in this paper are not necessarily directed paths or cycles.
We denote the path $P$  by the sequence of its vertices $P=u_0u_1\dots u_n$. We say that $P$ is a \emph{directed path}  if $u_iu_{i+1}\in A(D)$ for $0\le i\le n-1$, $P$ is an \emph{anti-directed path}  if it has no directed subpath of length $2$.
Let $D$ be a digraph and $H$ a proper subdigraph of $D$. An arc $uv\in A(D)\setminus A(H)$ is a \emph{diagonal} of $H$ whenever $u,v\in V(H)$.
For a subset of vertices $S$ of a digraph $D$, $D[S]$ denotes the digraph induced by the vertex set $S$.
\begin{rem}\label{CKI}
Directed odd cycles and $\overrightarrow{C}_7(1,2)$ are CKI-digraphs.  Directed even cycles are KP-digraphs.
\end{rem}
The following remark is a consequence of the definition of KP-digraphs and CKI-digraphs.
\begin{rem}\label{sub CKI}\label{subNIC}
If $D$ is a CKI-digraph (or a KP-digraph), then $D$ has no proper induced CKI-subdigraph. In particular, $D$ has no proper induced subdigraph isomorphic to  a directed odd cycle.
\end{rem}

Circulant digraphs are regular, vertex transitive and isomorphic to its converse digraph (the \emph{converse} digraph $D^{-1}$ of a digraph $D$ is obtained by reversing the arcs of $D$). Thus, if a circulant digraph $D$ is CKI, then $D^{-1}$ is also CKI. This is not true in general, due to  \cite{duchet1981note}.
A graph is a \emph{perfect graph}, if for every induced subgraph, the clique number equals the chromatic number.

\begin{teo}[Strong Perfect Graph Theorem; \cite{chudnovsky2006strong}]\label{T Chud}
A graph $G$ is not perfect if and only if $G$ has as an induced subgraph
\begin{enumerate}
\item[$(i)$]  an odd cycle on at least $5$ vertices or
\item[$(ii)$]  the complement of an odd cycle on at least $7$ vertices.
\end{enumerate}
\end{teo}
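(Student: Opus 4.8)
The plan is to recognize that this statement is precisely the \emph{Strong Perfect Graph Theorem}, one of the deepest results in structural graph theory, and that a genuine proof is the roughly $150$-page argument of Chudnovsky, Robertson, Seymour and Thomas; what I can realistically sketch is the architecture of that argument rather than its hundreds of cases. First I would reformulate the claim. Call a graph \emph{Berge} if it contains no induced odd cycle on at least $5$ vertices (an \emph{odd hole}) and no induced complement of such a cycle on at least $7$ vertices (an \emph{odd antihole}); the threshold $7$ rather than $5$ is explained by the fact that $C_5$ is self-complementary. The easy direction of the biconditional is immediate: an odd hole $C_{2n+1}$ has clique number $2$ but chromatic number $3$, its complement has clique number $n$ but chromatic number $n+1$, so both are imperfect, and any graph containing an induced imperfect subgraph is itself imperfect. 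Thus the entire content lies in the converse, namely \emph{every Berge graph is perfect}, equivalently: the only minimally imperfect graphs are odd holes and odd antiholes. I would also record at the outset that the class of Berge graphs is closed under complementation by definition, and the class of perfect graphs is closed under complementation by the Weak Perfect Graph Theorem, so the whole problem is complement-symmetric; this symmetry is what makes the ensuing decompositions come in complementary pairs.

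Second, I would attack the converse through the decomposition strategy, whose centerpiece is a structural theorem: \emph{every Berge graph is either ``basic'' or admits one of a short list of decompositions.} The basic classes are bipartite graphs, line graphs of bipartite graphs, the complements of these two classes, and the so-called double split graphs; each is verified directly to be perfect. The permitted decompositions are the $2$-join, the $2$-join of the complement, and the balanced skew partition. I would then invoke the classical ``no decomposition in a minimal counterexample'' lemmas: a minimally imperfect graph other than an odd hole or odd antihole can have none of these. For the $2$-join this rests on work of Cornuéjols and Cunningham, and for the skew partition it is the resolution of Chvátal's skew-partition conjecture (a balanced skew partition, in particular a star cutset, cannot occur in a minimally imperfect Berge graph). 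Combining the structure theorem with these lemmas forces a hypothetical minimally imperfect Berge graph to be basic, contradicting the perfection of the basic classes; hence no such graph exists, and every Berge graph is perfect.

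The main obstacle, and the reason the true proof is so long, is the decomposition theorem itself. Its proof is a vast and carefully staged case analysis organized around which unavoidable configurations a Berge graph contains: one first treats graphs containing a \emph{prism}, or more precisely the line graph of a bipartite subdivision of $K_4$, and graphs containing a long hole with prescribed attachments; then, using complement-symmetry, the graphs containing the complementary configurations; and finally the residual graphs, proving in each regime that either a basic structure or one of the three admissible decompositions must appear. Controlling how chordless paths attach to these configurations, and ruling out the exceptional substructures, is where essentially all the difficulty resides; by comparison the scaffolding around it (the easy direction, the perfection of the basic classes, and the decomposition-preservation lemmas) is comparatively routine and largely predates the Chudnovsky--Robertson--Seymour--Thomas work. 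For the purposes of the present paper the theorem is used as a black box through the citation \cite{chudnovsky2006strong}, and it is this black box that I would rely on rather than attempt to reprove.
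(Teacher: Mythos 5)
The paper gives no proof of this statement at all: it is the Strong Perfect Graph Theorem, imported verbatim from \cite{chudnovsky2006strong} and used purely as a black box, which is exactly how you treat it. Your high-level sketch of the Chudnovsky--Robertson--Seymour--Thomas architecture (easy direction via $\chi$ versus $\omega$ on odd holes and antiholes, decomposition into basic classes plus $2$-joins and balanced skew partitions, and the no-decomposition-in-a-minimal-counterexample lemmas) is accurate, and your conclusion to rely on the citation matches the paper's treatment.
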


Let $H$ be a subdigraph of $G$. An \emph{absorbing vertex} in $H$ is a vertex  $v\in V(H)$ such that $uv\in A(H)$ for every vertex $u\in V(H)\setminus \{v\}$. A digraph $D_G$ is an orientation (or biorientation) of a graph $G$ if $V(D_G)=V(G)$ and $\{u,v\}\in E(G)$ if and only if $\{uv,vu\}\cap A(D)\neq \emptyset$, and an orientation $D_G$ of $G$ is an \emph{orientation by sinks} if every complete subgraph of $G$ has an absorbing vertex in $D_G$.

\begin{teo}[\cite{berge1990recent,boros1996perfect}]\label{perf s order}
A graph $G$ is perfect if and only if every orientation by sinks of $G$ is a KP-digraph.
\end{teo}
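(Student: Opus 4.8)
The plan is to prove the two implications separately, using Theorem~\ref{T Chud} to organise the converse and the kernel-solvability of perfect graphs for the forward direction. Throughout I use that an ``orientation by sinks'' is exactly a biorientation in which every clique has an absorbing vertex, and I first record the reduction that makes the forward direction local: if $G$ is perfect and $D_G$ is an orientation by sinks of $G$, then for every $W\subseteq V(G)$ the induced subdigraph $D_G[W]$ is an orientation by sinks of $G[W]$. Indeed, $G[W]$ is perfect as an induced subgraph of a perfect graph, every clique of $G[W]$ is a clique of $G$, and its absorbing vertex lies in $W$ with the same incident arcs in $D_G[W]$ as in $D_G$. Hence, to prove that $D_G$ is a KP-digraph it suffices to prove the single statement that \emph{every orientation by sinks of a perfect graph has a kernel}, and then apply it to each $G[W]$.

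For that single statement I would argue by Scarf's lemma. Since every clique of $G$ has an absorbing vertex, the orientation restricted to each clique is acyclic (clique-acyclic), and Scarf's lemma then produces a \emph{fractional kernel}: a vector $x\ge 0$ lying in the clique-constrained stable-set polytope $\{x:\sum_{v\in K}x_v\le 1 \text{ for every clique } K\}$ such that every vertex is fractionally absorbed by $x$. Because $G$ is perfect, this polytope is integral (its vertices are precisely the incidence vectors of independent sets), so the fractional kernel can be taken to be the characteristic vector of an independent, absorbing set, i.e.\ a genuine kernel. I expect this to be the main obstacle of the whole theorem: it cannot be obtained by an elementary induction and is exactly the Boros--Gurvich/Aharoni--Holzman content that perfect graphs are kernel-solvable, resting on both Scarf's lemma and the polyhedral characterisation of perfection.

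For the converse I would proceed by contraposition: assuming $G$ is not perfect, I construct an orientation by sinks of $G$ that is not KP. By Theorem~\ref{T Chud}, $G$ contains, as an induced subgraph $H$, either an odd cycle $C_{2n+1}$ with $n\ge 2$ or the complement $\overline{C}_{2n+1}$ with $n\ge 3$. In the first case I orient $H$ as the directed cycle $\overrightarrow{C}_{2n+1}$; its only cliques are vertices and edges, each of which has an absorbing vertex, so $\overrightarrow{C}_{2n+1}$ is an orientation by sinks, and by Remark~\ref{CKI} it has no kernel. In the antihole case I exhibit an analogous kernel-free orientation by sinks: for $n=3$ one checks $\overline{C}_7$ is isomorphic to the circulant graph underlying $\overrightarrow{C}_7(1,2)$ (its triangles $\{i,i+1,i+2\}$ each have $i+2$ as absorbing vertex, so it is an orientation by sinks), and by Remark~\ref{CKI} $\overrightarrow{C}_7(1,2)$ has no kernel; the general odd antihole is handled by the same circulant pattern on its size-$n$ cliques. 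It remains to extend this kernel-free orientation $D_H$ of $H$ to all of $G$: I biorient every edge of $G$ with at least one endpoint outside $V(H)$ and keep $D_H$ on the edges inside $H$. Then any clique meeting $V(G)\setminus V(H)$ has an absorbing vertex (every vertex outside $H$ receives all incident arcs), and any clique contained in $H$ keeps its $D_H$-sink, so the result is an orientation by sinks of $G$; moreover its induced subdigraph on $V(H)$ is exactly $D_H$, which has no kernel, so the orientation is not KP.

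The principal difficulty is therefore the forward implication, whose proof genuinely requires the fractional/fixed-point machinery rather than induction; a secondary but routine technical point is supplying the uniform kernel-free orientation by sinks of the general odd antihole, which one packages as a circulant construction generalising $\overrightarrow{C}_7(1,2)$ and verifies by a finite check of the sink property and of the non-existence of a two-element absorbing independent set.
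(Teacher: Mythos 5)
Your statement is one the paper does not prove at all: Theorem~\ref{perf s order} is quoted from \cite{berge1990recent,boros1996perfect} (the Berge--Duchet conjecture, whose hard direction is the Boros--Gurvich theorem that perfect graphs are kernel-solvable). So your attempt can only be measured against the proofs in that literature, and your overall architecture does match them: reduce the forward direction to the single statement that every orientation by sinks of a perfect graph has a kernel (Scarf's lemma plus perfection, i.e.\ the Aharoni--Holzman route), and prove the converse via Theorem~\ref{T Chud} by exhibiting kernel-free orientations by sinks of odd holes and odd antiholes and extending them to $G$ by biorienting every edge leaving the induced subgraph. Your hereditary reduction, the directed-cycle orientation of odd holes, and the biorientation extension are all correct.

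The genuine gap is the odd antihole case. The ``same circulant pattern'' generalising $\overrightarrow{C}_7(1,2)$, namely $\overrightarrow{C}_{2n+1}(1,2,\dots,n-1)$ (whose underlying graph is indeed $\overline{C}_{2n+1}$), is \emph{not} an orientation by sinks once $2n+1\ge 9$: for $n\ge 4$ the vertices $\{0,\,n-1,\,2n-2\}$ are pairwise adjacent (their differences are $n-1$, $n-1$ and $3$, all at most $n-1$) and carry the directed triangle $0\to n-1\to 2n-2\to 0$, a clique of the underlying graph with no absorbing vertex. Worse, for $\overline{C}_9$ no asymmetric circulant orientation can work at all, because the difference-$3$ edges form the triangles $\{i,i+3,i+6\}$ and any rotation-invariant asymmetric rule orients them cyclically. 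So the ``finite check of the sink property'' you defer to would fail for every antihole larger than $7$; $\overrightarrow{C}_7(1,2)$ is special precisely because no three jumps from $\{1,2\}$ can sum to $7$. A correct construction must use bioriented edges (which the definition of orientation by sinks permits): for $\overline{C}_9$, take the arcs $i\to i+2$, $i\to i+4$ and both arcs between $i$ and $i+3$. Every clique of $\overline{C}_9$ is an independent set of $C_9$, hence has at most $4$ vertices, and its asymmetric part is acyclic since no sum of at most four terms from $\{2,4\}$ is a multiple of $9$; yet no independent set $\{i\}$ or $\{i,i+1\}$ of $\overline{C}_9$ absorbs the vertex $i+2$, so there is no kernel. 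A secondary, smaller issue: in the forward direction, passing from a fractional kernel to an integral one is not simply ``integrality of the polytope'' --- the fractional kernel need not be a vertex of it, and the absorption inequalities are not facets of the stable set polytope; Aharoni--Holzman need the stronger output of Scarf's lemma (a tight, dominating clique at every vertex) to extract a stable absorbing set. Since you explicitly invoke that literature this can stand as a citation, but as written the one-line deduction is not a proof.
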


For another relation between kernels and perfect graphs see \cite{galeana2012new}.

\begin{teo}[\cite{galeana1986kernel}]\label{T semi}
A~ semicomplete~ digraph~ $D$~ is~ a~ CKI-digraph~ if~ and~ only~ if~ $D\cong \overrightarrow{C}_3$ or  ${D\cong \overrightarrow{C}_n(1,\pm2,\pm3,\dots,\pm\lfloor \frac{n}{2}\rfloor)}$, for some $n\ge4$.
\end{teo}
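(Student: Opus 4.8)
The plan is to exploit the fact that an independent set in a semicomplete digraph has at most one vertex, so a kernel, when it exists, is a single \emph{absorbing} vertex $v$ (one with $uv\in A(D)$ for every $u\neq v$). Hence a semicomplete digraph has a kernel if and only if it has an absorbing vertex; and since between two strong components of a semicomplete digraph all arcs point the same way, this happens exactly when the terminal strong component contains an absorbing vertex. In particular a semicomplete CKI-digraph $D$ must be strongly connected (otherwise its terminal component is a proper induced subdigraph whose absorbing vertex would already be an absorbing vertex, hence a kernel, of $D$), it has no absorbing vertex, yet every proper induced subdigraph has one.

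The key step is to turn the deletion condition into a rigid degree condition. First I would delete a single vertex $v$: the kernel of $D-v$ is an absorbing vertex $f(v)$ of $D-v$, and since $D$ itself has no absorbing vertex we must have $f(v)\to v$ but $v\not\to f(v)$. Thus $f(v)$ is dominated by every vertex except $v$, i.e.\ its in-neighbourhood is exactly $V(D)\setminus\{v,f(v)\}$. Writing $n=|V(D)|$, each such $f(v)$ has in-degree $n-2$ and a \emph{unique} non-dominator, namely $v$; letting $v$ range over all vertices shows that the assignment $w\mapsto \bar w$ (the unique vertex not dominating $w$) is a surjection, hence a bijection $\sigma$ of $V(D)$. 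Therefore every vertex has in-degree exactly $n-2$, and the whole digraph is encoded by $\sigma$: the arc $a\to b$ is present precisely when $a\neq\sigma(b)$. Semicompleteness forbids fixed points and $2$-cycles of $\sigma$, so $\sigma$ is a permutation all of whose cycles have length at least $3$.

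With $D=D_\sigma$ in hand, the kernel structure becomes purely combinatorial in $\sigma$. For $S\subseteq V(D)$, a vertex $v\in S$ is absorbing in $D[S]$ if and only if $\sigma(v)\notin S$, so $D[S]$ has a kernel if and only if $S$ is not $\sigma$-invariant. Consequently \emph{every} proper induced subdigraph has a kernel if and only if $\sigma$ admits no nonempty proper invariant subset, i.e.\ if and only if $\sigma$ is a single $n$-cycle. Relabelling the vertices so that $\sigma(i)=i+1$ in $\mathbb{Z}_n$ turns the rule into $a\to b\iff b-a\neq -1$, which is $\overrightarrow{C}_3$ when $n=3$ and the circulant $\overrightarrow{C}_n(1,\pm2,\dots,\pm\lfloor\frac{n}{2}\rfloor)$ when $n\geq 4$; running the same computation backwards shows each of these is indeed CKI, giving both directions.

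I expect the main obstacle to be the middle step: proving that single-vertex deletions force the uniform in-degree $n-2$ and hence the permutation $\sigma$. The delicate points there are verifying that the absorbing vertex of $D-v$ cannot absorb $v$ as well (this is exactly where ``$D$ has no absorbing vertex'' is used) and that the resulting non-dominator map is genuinely a bijection rather than merely a surjection. Once the digraph is expressed as $D_\sigma$, the remaining equivalence between ``all proper subdigraphs have kernels'' and ``$\sigma$ is an $n$-cycle'' is routine.
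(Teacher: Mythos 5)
Your proposal is correct, but there is nothing in the paper to compare it against: Theorem~\ref{T semi} is stated as a quoted result from \cite{galeana1986kernel}, and the paper itself contains no proof of it. So your argument can only be judged on its own terms, and on those terms it is sound and essentially complete. The reduction of kernels to absorbing vertices in semicomplete digraphs is right; the deletion step correctly shows that for each $v$ the absorbing vertex $f(v)$ of $D-v$ has $N^-(f(v))=V(D)\setminus\{v,f(v)\}$ (this is exactly where ``$D$ has no kernel'' is used), and $f$ is injective because $v$ is recoverable from $f(v)$ as its unique non-dominator, so finiteness upgrades this to a bijection: every vertex has in-degree $n-2$ and the non-dominator map $\sigma$ is a permutation with no fixed points and no $2$-cycles. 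The translation ``$v$ is absorbing in $D[S]$ iff $\sigma(v)\notin S$'' is correct, hence $D[S]$ has a kernel iff $S$ is not $\sigma$-invariant; since the invariant sets of a permutation are the unions of its cycles, $D$ is CKI iff $\sigma$ is a single $n$-cycle, and the converse direction is the same computation read backwards, as you say. The identification with the stated circulants also checks out: after relabelling so that $\sigma(i)=i+1$, the arcs are the pairs $(a,b)$ with $b-a\not\equiv 0,-1 \pmod n$, and the connection set $\{1,\pm2,\dots,\pm\lfloor \frac{n}{2}\rfloor\}$ equals $\mathbb{Z}_n\setminus\{0,-1\}$ for every $n\ge 4$ (both parities), while $n=3$ gives $\overrightarrow{C}_3$. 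Two cosmetic remarks for a final write-up: the strong-connectivity observation in your first paragraph is never actually used (the argument only needs ``$D$ has no absorbing vertex''), and it is worth one sentence disposing of $n\le 2$ (no semicomplete digraph on at most two vertices is CKI, consistent with all cycles of $\sigma$ having length at least $3$) and one sentence verifying the connection-set identity above, since that is the only place an off-by-one error could hide.
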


\begin{teo}[\cite{galeana2016characterization}]\label{T local semi}
A locally semicomplete digraph $D$ is a CKI-digraph if and only if  $D$ is an odd cycle, $D\cong \overrightarrow{C}_7(1,2)$ or $D\cong \overrightarrow{C}_{n}(1,\pm 2,\pm 3,\dots,\pm \lfloor \frac{n}{2} \rfloor )$, for some $n\ge4$.
\end{teo}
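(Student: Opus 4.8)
The plan is to prove both implications, with essentially all the work in the ``only if'' direction.

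For the ``if'' direction I would simply certify the three families. Directed odd cycles and $\overrightarrow{C}_7(1,2)$ are CKI by Remark~\ref{CKI}, and each circulant $\overrightarrow{C}_n(1,\pm2,\dots,\pm\lfloor n/2\rfloor)$ is semicomplete and hence CKI by Theorem~\ref{T semi}. That all three families are locally semicomplete is immediate: in a directed cycle each in- and out-neighbourhood is a single vertex, $\overrightarrow{C}_7(1,2)$ is round, and a semicomplete digraph is trivially locally semicomplete.

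For the ``only if'' direction, let $D$ be a locally semicomplete CKI-digraph. First I would reduce to the strong case, since a CKI-digraph is strongly connected: if $D$ were not strong, take a terminal strong component $C$, let $N_C$ be a kernel of the proper induced subdigraph $C$, delete $N_C$ together with its in-neighbours, take a kernel $N^\ast$ of what remains, and check that $N_C\cup N^\ast$ is a kernel of $D$, a contradiction. By Remark~\ref{sub CKI}, $D$ then has no proper induced directed odd cycle. The key local observation is that \emph{in any locally semicomplete digraph every chordless cycle of length at least $4$ in the underlying graph is a directed cycle}: if a cycle vertex $u_i$ received both of its cycle-neighbours $u_{i-1},u_{i+1}$, then $u_{i-1},u_{i+1}\in N^-(u_i)$ would be adjacent by local in-semicompleteness, contradicting chordlessness; dually if $u_i$ dominated both its neighbours one uses local out-semicompleteness. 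Hence each cycle vertex has exactly one entering and one leaving cycle-edge. Consequently, if the underlying graph $G_D$ contains an induced cycle of odd length at least $5$, the corresponding induced subdigraph is a directed odd cycle, which by Remark~\ref{sub CKI} cannot be proper; so $D$ is itself that odd directed cycle, and we are done in this case.

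It remains to treat the case in which $G_D$ has no induced odd cycle of length at least $5$. By the Strong Perfect Graph Theorem (Theorem~\ref{T Chud}), $G_D$ is then either perfect or contains an induced odd antihole $\overline{C_{2k+1}}$ with $k\ge 3$. In the perfect case I would invoke the structural classification of strong locally semicomplete digraphs to force $D$ to be semicomplete: a non-semicomplete such digraph is round decomposable $R[D_1,\dots,D_\alpha]$ or exceptional, and in the round-decomposable case each factor $D_i$, being a proper induced subdigraph of the CKI-digraph $D$ and being semicomplete, has a single absorbing vertex, so a kernel of the quotient round local tournament $R$ lifts to a kernel of $D$ unless every factor is trivial; the resulting non-semicomplete round local tournament, like the exceptional digraphs, produces a proper induced odd directed cycle, incompatible with $G_D$ perfect. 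Thus $D$ is semicomplete and Theorem~\ref{T semi} yields $D\cong\overrightarrow{C}_3$ or a circulant. For an induced odd antihole, the length-$7$ case $\overline{C_7}$ is realised by the locally semicomplete orientation $\overrightarrow{C}_7(1,2)$, and by Remark~\ref{sub CKI} this copy must be all of $D$.

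The hard part will be exactly this last analysis: ruling out induced odd antiholes $\overline{C_{2k+1}}$ with $k\ge 4$ and showing that $\overline{C_7}$ forces $\overrightarrow{C}_7(1,2)$. This is delicate for two reasons. First, unlike odd holes, the antiholes $\overline{C_{2k+1}}$ contain no induced odd hole, so the chordless-cycle observation alone does not dispose of them; instead one must control which odd antiholes can occur as underlying graphs of locally semicomplete digraphs at all (a structural question about their underlying graphs) and analyse the few admissible orientations directly. Second, one cannot replace this by a counting argument on kernels: the example $\overrightarrow{C}_8(1,2)$ is a kernel-free round local tournament, yet it contains the induced directed $5$-cycle on $\{0,2,3,5,6\}$ and hence fails to be CKI. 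So the proof must genuinely exploit the absence of \emph{proper induced odd cycles}, via the observation above, rather than merely the absence of a kernel, and that is where the real case analysis of the round and antihole structure lies.
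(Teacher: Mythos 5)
Your proposal never actually proves the statement: the odd-antihole case, which you yourself flag as ``the hard part,'' is exactly where all the real work lies, and you leave it as a description of what would have to be done rather than an argument. In the paper (which re-derives this theorem as Corollary \ref{T} of the more general Theorem \ref{T in}), that work is the second half of Lemma \ref{semi ciclo}: a long arc-by-arc analysis showing that if $\overline{G}_D$ is an induced cycle on at least $6$ vertices and $D$ is locally in- or out-semicomplete, then $D$ is asymmetric, the cycle length is forced to be odd, and $D\cong \overrightarrow{C}_{2n+1}(2,-3,4,\dots,(-1)^n n)$. Once that orientation is pinned down, antiholes on $2n+1\ge 9$ vertices are killed by exhibiting a \emph{proper induced asymmetric} $\overrightarrow{C}_3$ (take odd $i,j,k\ge 3$ with $i+j+k=2n+1$; the vertices $u_0,u_i,u_{i+j}$ induce a directed triangle since all three pairwise differences are odd), contradicting Remark \ref{sub CKI}; the surviving case $2n+1=7$ gives $\overrightarrow{C}_7(2,-3)\cong\overrightarrow{C}_7(1,2)$, which by Remark \ref{sub CKI} must be all of $D$. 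Your diagnosis of why a naive argument fails is correct (your $\overrightarrow{C}_8(1,2)$ example is a genuine kernel-free non-CKI round local tournament, and it is indeed the absence of proper induced odd cycles, not of kernels, that must be exploited), but diagnosing the obstruction is not the same as overcoming it: without determining the admissible orientations of odd antiholes and eliminating lengths $\ge 9$, the characterization is not established.

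A secondary divergence: in the perfect case the paper does not use the Bang-Jensen--Guo round/exceptional classification you invoke. It argues via Theorem \ref{perf s order}: a CKI-digraph with perfect underlying graph cannot be oriented by sinks, so it has an induced semicomplete subdigraph with no absorbing vertex, hence an induced semicomplete CKI-subdigraph, which by Remark \ref{sub CKI} is $D$ itself, and Theorem \ref{T semi} finishes. Your lifting step is fine (a kernel of the quotient round local tournament does lift through absorbing vertices of the semicomplete factors, so the quotient is kernel-free), but your conclusion rests on two assertions you never justify: that a kernel-free non-semicomplete round local tournament contains a proper induced directed odd cycle, and that the exceptional strong locally semicomplete digraphs do as well. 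Proving those without circular appeal to the very characterization being established is additional work that the paper's one-paragraph orientation-by-sinks argument avoids entirely.
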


\section{Generalized tournaments}

In this section we characterize the CKI-digraphs with a perfect underlying graph and the locally in- and the locally out-semicomplete CKI-digraphs.
As a consequence of Remark \ref{sub CKI}, Theorems \ref{perf s order} and  \ref{T semi} we have the following characterization of CKI-digraphs having a perfect underlying graph.
\begin{teo}\label{*}
Let $D$ be a digraph such that the underlying graph, $G_D$, is a perfect graph. Then $D$ is CKI if and only if $D\cong \overrightarrow{C}_3$ or $D\cong \overrightarrow{C}_n(1,\pm2,\pm3,\dots,\pm\lfloor \frac{n}{2}\rfloor)$, for some $n\ge4$.
\end{teo}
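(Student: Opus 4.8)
The plan is to establish both directions by reducing the forward implication to the semicomplete case, where Theorem~\ref{T semi} applies directly. For the backward implication, I would note that $\overrightarrow{C}_3$ and every $\overrightarrow{C}_n(1,\pm2,\pm3,\dots,\pm\lfloor \frac{n}{2}\rfloor)$ with $n\ge 4$ are semicomplete digraphs, so their underlying graphs are complete and hence trivially perfect; Theorem~\ref{T semi} then certifies that each of them is CKI. Thus any $D$ isomorphic to one of these simultaneously has a perfect underlying graph and is CKI, which is all the backward direction requires.

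For the forward implication, assume $D$ is CKI and $G_D$ is perfect. The heart of the argument is the claim that $D$ must be semicomplete. To prove it, observe that $D$ is by construction an orientation (in the biorientation sense) of $G_D$, and that a CKI-digraph has no kernel and is therefore not a KP-digraph. Since $G_D$ is perfect, Theorem~\ref{perf s order} says that every orientation by sinks of $G_D$ is KP; consequently $D$ itself cannot be an orientation by sinks. Unfolding the definition, this yields a complete subgraph $K$ of $G_D$ that possesses no absorbing vertex in $D$.

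I would then examine the induced subdigraph $H:=D[V(K)]$. Because $V(K)$ spans a complete graph in $G_D$, $H$ is semicomplete, so every independent set of $H$ has at most one vertex and a kernel of $H$ can only be a single absorbing vertex. Since $K$ has no absorbing vertex in $D$, the digraph $H$ has no kernel. But the CKI hypothesis forces every proper induced subdigraph of $D$ to have a kernel (equivalently, by Remark~\ref{sub CKI}, $D$ admits no proper induced CKI-subdigraph); hence $H$ cannot be proper, i.e.\ $V(K)=V(D)$ and $D=H$ is semicomplete. Applying Theorem~\ref{T semi} to the semicomplete CKI-digraph $D$ then delivers $D\cong\overrightarrow{C}_3$ or $D\cong\overrightarrow{C}_n(1,\pm2,\pm3,\dots,\pm\lfloor \frac{n}{2}\rfloor)$, finishing the proof.

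I expect the main obstacle to be precisely this reduction to semicompleteness: converting the failure of ``$D$ has a kernel'' into an explicit kernel-free \emph{semicomplete} induced subdigraph. Theorem~\ref{perf s order} is the essential device, since perfection of $G_D$ guarantees that the only way $D$ can fail to be KP is through some complete subgraph lacking an absorbing vertex, and the CKI hypothesis is what upgrades that subdigraph from proper to all of $D$. Once semicompleteness is secured, the characterization is immediate from Theorem~\ref{T semi}.
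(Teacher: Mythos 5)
Your proof is correct and follows essentially the same route as the paper: both use Theorem~\ref{perf s order} to conclude that $D$ is not an orientation by sinks of $G_D$, extract from this a kernel-free semicomplete induced subdigraph $H$, use the CKI hypothesis to force $H=D$, and finish with Theorem~\ref{T semi}. Your only deviation is minor and harmless (in fact slightly cleaner): you apply the definition of CKI directly to conclude that $H$ cannot be proper, whereas the paper first extracts a CKI-subdigraph $H'$ inside $H$ and then invokes Remark~\ref{sub CKI}; you also spell out the backward direction, which the paper leaves implicit.
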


\begin{proof}
Let $D$ be a CKI-digraph such that $G_D$ is a perfect graph. By Theorem \ref{perf s order}, $D$ is not oriented by sinks, and therefore $D$ has an induced semicomplete subdigraph $H$ with no sink. Hence, $H$ is not KP and contains an induced subdigraph $H'$ which is semicomplete and CKI. By Remark \ref{sub CKI}, $D$ has no proper induced CKI-subdigraph, therefore $D=H'$. Hence, $G_D$ is complete,  $D$ is semicomplete and by Theorem \ref{T semi}, $D\cong \overrightarrow{C}_3$ or $D\cong \overrightarrow{C}_n(1,\pm2,\pm3,\dots,\pm\lfloor \frac{n}{2}\rfloor)$, for some $n\ge4$.
\end{proof}

A digraph is a \emph{semicomplete multipartite tournament} if the underlying graph is a complete multipartite graph. A digraph is \emph{quasi-transitive} if for every directed path $uvw$ there is an arc between $u$ and $w$.  We have the following corollary.

\begin{coro}\label{C semi quasi}
Let $D$ be a semicomplete multipartite digraph or a quasi-transitive digraph. Then $D$ is  CKI if and only if  $D\cong \overrightarrow{C}_3$ or
$D\cong\overrightarrow{C}_{m}(1,\pm 2,\pm 3,\dots,\pm \lfloor \frac{m}{2} \rfloor )$ for some $m\ge4$.
\end{coro}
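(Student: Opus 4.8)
The plan is to reduce the corollary to Theorem \ref{*}: once we know that both a semicomplete multipartite digraph and a quasi-transitive digraph have a perfect underlying graph, the equivalence is immediate from the characterization already obtained there. So the whole task splits into verifying perfectness of $G_D$ in each of the two families, and then checking that the digraphs in the conclusion genuinely lie in (one of) the families.

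First I would dispose of the semicomplete multipartite case, which is essentially trivial. By definition $G_D$ is a complete multipartite graph, and such a graph is perfect. One clean way to see this using the only perfect-graph input available in the excerpt, Theorem \ref{T Chud}, is to note that a complete multipartite graph contains no induced odd hole $C_{2k+1}$ with $k\ge 2$ and no induced complement of an odd cycle on at least $7$ vertices: non-adjacency in a complete multipartite graph is the equivalence relation ``same part'', whereas in $C_{2k+1}$ (and in $\overline{C_{2k+1}}$) the non-adjacency relation is not transitive for $2k+1\ge 5$. Hence neither forbidden configuration can occur, and $G_D$ is perfect.

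The quasi-transitive case is where I expect the real work, and it is the main obstacle. Here I would invoke the recursive structure theorem for quasi-transitive digraphs of Bang-Jensen and Huang (see \cite{bang1995quasi}), which writes every quasi-transitive digraph as a substitution $T[H_1,\dots,H_t]$ in which $T$ is either a transitive oriented graph or a semicomplete digraph and each $H_i$ is again quasi-transitive on fewer vertices. The base underlying graphs are perfect: the underlying graph of a transitive oriented graph is a comparability graph, and the underlying graph of a semicomplete digraph is complete. Since the underlying graph of $T[H_1,\dots,H_t]$ is obtained from $G_T$ by substituting the graphs $G_{H_i}$ for its vertices, and the class of perfect graphs is closed under substitution, an induction on $|V(D)|$ yields that $G_D$ is perfect.

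With perfectness established for both families, Theorem \ref{*} gives at once the ``only if'' direction: any CKI-digraph $D$ lying in either family must satisfy $D\cong\overrightarrow{C}_3$ or $D\cong\overrightarrow{C}_m(1,\pm2,\pm3,\dots,\pm\lfloor \frac{m}{2}\rfloor)$ for some $m\ge 4$. For the converse I would observe that each of these digraphs is semicomplete, since its underlying graph is complete; in particular it is a semicomplete multipartite digraph (with all parts of size one), so it belongs to one of the two families, and it is CKI by Theorem \ref{T semi}. This closes the equivalence and completes the proof.
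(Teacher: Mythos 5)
Your proof is correct and it reduces the corollary to Theorem \ref{*} exactly as the paper does; the difference lies entirely in how you certify that the underlying graphs are perfect. The paper's proof is two lines: a complete multipartite graph is perfect, and for the quasi-transitive case it simply cites the classical fact (attributed to \cite{duchet1984introduction}, in essence Ghouila-Houri's theorem) that the underlying graph of a quasi-transitive digraph is a comparability graph, hence perfect. You instead prove perfectness of $G_D$ for quasi-transitive $D$ by induction on $|V(D)|$, using the Bang-Jensen--Huang canonical decomposition $D=T[H_1,\dots,H_t]$ together with Lov\'asz's replacement lemma (perfection is closed under substitution). This is a genuinely different route: it trades one classical citation for two (the decomposition theorem and the substitution closure), so it is heavier machinery, but it stays inside the generalized-tournament toolkit and makes the structural reason for perfectness explicit instead of appealing to comparability graphs; your elementary verification that complete multipartite graphs contain no odd holes or antiholes (non-adjacency is an equivalence relation, hence transitive, which fails in $C_{2k+1}$ and its complement for $2k+1\ge 5$) is likewise more detail than the paper gives, and is fine. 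One small economy you could make: the closing paragraph checking that $\overrightarrow{C}_3$ and the circulants themselves belong to the families is not needed, since the corollary quantifies over digraphs $D$ already assumed to lie in one of the families, so both directions of the equivalence follow from Theorem \ref{*} (and, for the ``if'' direction, the completeness of the underlying graphs of the listed digraphs) once $G_D$ is known to be perfect.
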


\begin{proof}
The underlying graph of a semicomplete multipartite digraph is a complete multipartite graph, which is a perfect graph  and
the underlying graph of a quasi-transitive digraph is a comparability graph, see  \cite{duchet1984introduction}, which is a perfect graph.
Hence, the result follows.
\end{proof}

As a consequence of Theorem \ref{*} and Corollary \ref{C semi quasi} we have the following result.
\begin{teo}\label{*KP}
Let $D$ be a digraph such that the underlying graph 
is a perfect graph. Then $D$ is KP if and only if $D$ has no induced subdigraph $H$ such that $H\cong\overrightarrow{C}_3$ or $H\cong \overrightarrow{C}_n(1,\pm2,\pm3,\dots,\pm\lfloor \frac{n}{2}\rfloor)$, for some $n\ge4$. In particular, semicomplete multipartite digraphs and  quasi-transitive digraphs are KP if and only if they have no induced subdigraph $H$ with $H\cong \overrightarrow{C}_3$ or
$H\cong\overrightarrow{C}_{m}(1,\pm 2,\pm 3,\dots,\pm \lfloor \frac{m}{2} \rfloor )$, for some $m\ge4$.
\end{teo}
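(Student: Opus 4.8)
The plan is to promote the two CKI-characterizations (Theorem \ref{*} and Corollary \ref{C semi quasi}) to KP-characterizations by means of the elementary duality between KP- and CKI-digraphs, using crucially that each of the three classes involved is closed under taking induced subdigraphs.

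First I would record the basic equivalence that a digraph $D$ is KP if and only if it contains no induced CKI-subdigraph (where $D$ itself is allowed as an improper induced subdigraph). The forward implication is immediate, since a KP-digraph has a kernel in each of its induced subdigraphs whereas a CKI-digraph has none; the converse follows by induction on $|V(D)|$, because once every proper induced subdigraph is known to have a kernel, $D$ is either KP or CKI, and the hypothesis rules out the latter. This is precisely the observation already used in Remark \ref{sub CKI} and in the introductory discussion.

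Next, for the first statement I would exploit that the class of digraphs with perfect underlying graph is hereditary: an induced subgraph of a perfect graph is perfect, so every induced subdigraph of $D$ again has a perfect underlying graph. Consequently any induced CKI-subdigraph $H$ of $D$ has a perfect underlying graph, and Theorem \ref{*} forces $H\cong\overrightarrow{C}_3$ or $H\cong\overrightarrow{C}_n(1,\pm2,\pm3,\dots,\pm\lfloor\frac n2\rfloor)$ for some $n\ge4$. Combined with the equivalence of the previous paragraph this gives the result in both directions: if $D$ has none of these as an induced subdigraph then it has no induced CKI-subdigraph at all and is therefore KP, while conversely each listed digraph is itself CKI (being $\overrightarrow{C}_3$ or semicomplete, its underlying graph is perfect, so Theorem \ref{*} applies), so a KP-digraph can contain no induced copy of one.

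Finally, for the ``in particular'' clause I would repeat the argument verbatim with Corollary \ref{C semi quasi} in place of Theorem \ref{*}, after noting that the two relevant families are also hereditary: an induced subgraph of a complete multipartite graph is complete multipartite, and if $uvw$ is a directed path inside an induced subdigraph of a quasi-transitive digraph then $u$ and $w$ are adjacent in $D$ and hence in the induced subdigraph. I do not expect a genuine obstacle in this proof; all the real content sits in Theorem \ref{*} and Corollary \ref{C semi quasi}, and the only points requiring care are the heredity of the three classes and the convention that the forbidden subdigraph $H$ may equal $D$ itself.
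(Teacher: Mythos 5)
Your proposal is correct and follows exactly the route the paper intends: the paper states Theorem \ref{*KP} without a written proof, as an immediate consequence of Theorem \ref{*} (and Corollary \ref{C semi quasi}), relying precisely on the facts you spell out — that a digraph is KP if and only if it has no induced CKI-subdigraph, and that having a perfect underlying graph is a hereditary property. Your write-up simply makes explicit the details the paper leaves implicit, so there is nothing to correct.
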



The following lemma determines the orientation of some induced subdigraphs in asymmetric locally in-semicomplete digraphs or asymmetric locally out-semicomplete digraphs. 
An arc ${uv\in A(D)}$ is \emph{asymmetric} if $vu\notin A(D)$ (\emph{symmetric} if $vu\in A(D)$ resp.). A digraph is \emph{asymmetric} if all its arcs are asymmetric arcs.
Observe that if $n\ge 5$, $\overline{G}_D$ is an induced cycle on $n$ vertices if and only if $G_D$ is an antihole on $n$ vertices.

\begin{lem}\label{semi ciclo}
Let $D$ be a locally in-semicomplete digraph or a locally out-semicomplete digraph. If $G_D$ is an induced cycle on at least $4$ vertices, then $D$ is a directed cycle. If $\overline{G}_D$ is an induced cycle on at least $5$ vertices, then $D\cong \overrightarrow{C}_{2n+1}(2,-3,4,-5,\dots, (-1)^nn)$.
\end{lem}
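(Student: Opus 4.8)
The plan is to prove both statements for locally \emph{in}-semicomplete $D$ and reduce the locally out-semicomplete case to it: $D$ is locally out-semicomplete iff its converse $D^{-1}$ is locally in-semicomplete, and $D^{-1}$ has the same underlying graph as $D$. Since the converse of a directed cycle is a directed cycle and $\overrightarrow{C}_{m}(J)\cong\overrightarrow{C}_{m}(-J)$ via $x\mapsto -x$, passing to the converse yields the same conclusion up to isomorphism, so it suffices to argue the in-semicomplete case. For the first statement, suppose $G_D$ is an induced cycle $v_0v_1\cdots v_{n-1}v_0$ with $n\ge4$. The only candidates for in-neighbours of $v_i$ are its two $G_D$-neighbours $v_{i-1},v_{i+1}$, which lie at distance $2$ in the cycle and hence are non-adjacent in $G_D$; if both were in-neighbours, $N^-(v_i)$ would contain a non-adjacent pair, contradicting local in-semicompleteness. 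Thus $d^-(v_i)\le1$ for all $i$, so the number of arcs is $\sum_i d^-(v_i)\le n$, while it is at least $n$ since each of the $n$ edges carries an arc. Hence every edge is a single asymmetric arc, $d^-(v_i)=1$, and (as the $G_D$-degree is $2$) also $d^+(v_i)=1$; a connected in- and out-regular orientation of a cycle is a directed cycle.

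For the second statement, $G_D$ is the antihole $\overline{C_m}$, $m\ge5$. I label $V(D)=\mathbb{Z}_m$ so that the non-edges of $G_D$ are exactly the pairs $\{i,i+1\}$ (the edges of $\overline{G}_D=C_m$), i.e. $i\sim j$ in $G_D$ iff $j-i\notin\{0,\pm1\}$ in $\mathbb{Z}_m$. The key reformulation is that local in-semicompleteness says exactly that each $N^-(v)$ contains no two consecutive elements of $\mathbb{Z}_m$, i.e. $N^-(v)$ is an independent set of $C_m$ avoiding $\{v-1,v,v+1\}$ (equivalently, no two consecutive vertices share a common out-neighbour). Thus $N^-(v)$ is an independent set of the path $N(v)=(v+2,v+3,\dots,v-2)$ on $m-3$ vertices, giving $d^-(v)\le\lceil(m-3)/2\rceil$. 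When $m$ is odd, $m=2n+1$, this bound equals $n-1$, so the number of arcs $\sum_v d^-(v)\le m(n-1)=\frac{m(m-3)}{2}$, which is precisely the number of edges of $\overline{C_m}$; since each edge carries an arc, equality holds throughout. Consequently $D$ is asymmetric, $d^-(v)=d^+(v)=n-1$ for every $v$, and each $N^-(v)$ is a \emph{maximum} independent set of the path $N(v)$.

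It remains to decide which maximum independent sets occur, and this is the heart of the matter. For the target $\overrightarrow{C}_{2n+1}(2,-3,4,\dots,(-1)^n n)$ one checks that $N^-(v)=\{v-2,v+3,v-4,v+5,\dots\}$, i.e. exactly the even-position vertices of the path $N(v)$, equivalently: the distance-$k$ chord is oriented forward ($i\to i+k$) when $k$ is even and backward ($i\to i-k$) when $k$ is odd. The plan is to show this alternating pattern is forced. Since $\gcd(2,m)=1$ for $m$ odd, the distance-$2$ chords form a single spanning cycle; I would propagate its orientation around $\mathbb{Z}_m$, at each step invoking both the independence of in-neighbourhoods and the asymmetry/regularity just established, to show that the distance-$2$ chords form a directed cycle and that every other chord is thereby determined. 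Reconciling the apparent per-vertex freedom (each $N^-(v)$ could a priori be any of the $n$ maximum independent sets of its path) into the single globally consistent pattern, up to the dihedral symmetries of the labelling and up to the converse, is the step I expect to be the main obstacle; the degree counts above are routine by comparison.

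Finally, the same propagation shows $m$ must be odd, the alternating pattern failing to close up consistently for even $m$. For even $m$ the distance-$2$ chords split into two cycles and the forced orientations cannot be made consistent: for the smallest case $m=6$, $\overline{C_6}$ is two triangles joined by a perfect matching, and the reformulation forces each matching arc's head $w$ to have $N^-(w)$ equal to the single matching in-neighbour (its two triangle-chords cannot enter $w$ without creating a consecutive pair in $N^-(w)$), so $w$ is a source of its triangle with triangle-in-degree $0$; a triangle admits at most one such vertex, yet the three matching heads force some triangle to contain two, a contradiction. I would establish the general even-$m$ impossibility by an analogous parity/counting argument, which together with the forcing of the preceding paragraph completes the proof; these two intertwined steps are the genuinely delicate part.
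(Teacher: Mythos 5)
Your proof of the first statement is complete and correct, and the reduction of the locally out-semicomplete case to the locally in-semicomplete case via the converse digraph (using $\overrightarrow{C}_m(J)\cong\overrightarrow{C}_m(-J)$) is sound; the paper instead argues the out-case dually, but this is only a packaging difference. Your setup for the second statement is also correct as far as it goes: for odd $m=2n+1$, viewing each $N^-(v)$ as an independent set of the path $(v+2,\dots,v-2)$ in $C_m$ gives $d^-(v)\le n-1$, and comparing $\sum_v d^-(v)$ with the $m(m-3)/2$ edges of $\overline{C}_m$ does force asymmetry, in-/out-regularity of degree $n-1$, and that every $N^-(v)$ is a maximum independent set of its path; this is in fact a cleaner route to asymmetry (for odd $m$) than the paper's. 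But the proposal stops exactly where the lemma's real content begins. You never show that the alternating pattern is forced, i.e. that $D\cong\overrightarrow{C}_{2n+1}(2,-3,4,\dots,(-1)^nn)$; you only announce a ``plan'' to propagate the orientation of the distance-$2$ chords and concede that reconciling the $n$ possible maximum independent sets per vertex into one global pattern is ``the main obstacle''. That obstacle \emph{is} the lemma. The paper overcomes it with two local implication rules, immediate from non-adjacency of consecutive vertices: if $u_iu_{i+j}\in A(D)$, then $u_{i+j}u_{i+1}\in A(D)$ when $j\ge3$, and $u_{i+j}u_{i-1}\in A(D)$ when $j\le m-3$. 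Iterating these (the paper's Claim 1) shows that once a single chord of length $j$ is oriented forward, \emph{all} chords of length $j$ are, and from this rigidity the circulant structure, the exact connection set and the parity of $m$ all follow. Nothing in your degree/independent-set framework plays this role, and asymmetry plus regularity alone do not obviously exclude globally inconsistent choices of the per-vertex maximum independent sets, so the gap is genuine, not merely presentational.

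The even case has the same problem. Your counting bound $d^-(v)\le\lceil(m-3)/2\rceil$ is too weak to give asymmetry when $m$ is even; your prism argument correctly disposes of $m=6$, but the general even case is again deferred to an unspecified ``analogous parity/counting argument''. In the paper, evenness is excluded as a short corollary of Claim 1: propagating the orientation of the distance-$2$ chords around an even cycle forces the chord $\{u_0,u_{m/2}\}$ to be oriented in both directions, i.e. to be symmetric, contradicting the asymmetry already established (and the paper proves asymmetry for \emph{all} $m\ge6$, including even $m$, which your counting does not). So both of the steps you yourself flag as ``genuinely delicate'' --- forcing the pattern for odd $m\ge7$ and the impossibility for even $m\ge8$ --- are precisely the parts of the paper's proof that are absent from yours.
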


\begin{proof}
Let $D$ be a locally in-semicomplete digraph (locally out-semicomplete digraph resp.). 
Suppose that $G_D$ is an induced cycle $C=u_0u_1\dots u_{n-1}u_0$, with $n\ge4$. Since $C$ is induced and $D$ is  locally in-semicomplete, (locally out-semicomplete resp.), 
there is no vertex $u_i\in V(C)$ such that $u_{i-1},u_{i+1}\in N^-(u_i)$, ($u_{i-1},u_{i+1}\in N^+(u_i)$ resp.), 
with indices taken modulo $n$. Thus, $C$ is an induced directed  cycle on at least $4$ vertices. Hence, if $D$ is a locally in-/out-semicomplete digraph and $G_D$  is an induced cycle on at least $n$ vertices, $n\ge4$, then $D$ is a induced directed cycle on $n$ vertices.

If $D$ is a locally in-semicomplete digraph or a locally out-semicomplete digraph and $\overline{G}_D$ is an induced cycle on $5$ vertices, then ${G}_D$ is an induced cycle on $5$ vertices and $D$ is a directed cycle on $5$ vertices. 

Let $D$ be a locally in-semicomplete digraph. Suppose that $\overline{G}_D$ is an induced cycle $C=u_0u_1\dots u_{n-1}u_0$, with $n\ge6$. Observe that $u_i$ and $u_{i+1}$ are not adjacent in $D$ for $0\le i\le n-1$ with indices taken modulo $n$ and since $D$ is locally in-semicomplete, $N^+(u_i)\cap N^+(u_{i+1})=\emptyset$. 

Let $u_iu_{i+j}\in A(D)$. Clearly, $2\le j\le n-2$. If $j\ge3$ and $u_{i+1}u_{i+j}\in A(D)$, then $u_i,u_{i+1}\in N^-(u_{i+j})$, a contradiction because $\{u_i,u_{i+1}\}\notin E(G_D)$. Thus, if $j\ge3$, it follows that $u_{i+1}u_{i+j}\notin A(D)$, see Figure \ref{Fig}. 
\begin{figure}[h!]
\centering
\begin{pspicture}(4,2.2)
        \psset{unit=.8, nodesep=3pt}        
        \rput(4.75,2.65){\rnode{1}{$u_{i-1}$}}
        \rput(5.1,1.3){\rnode{2}{$u_i$}}
        \rput(4.75,0){\rnode{3}{$u_{i+1}$}}
        \rput(.3,0){\rnode{6}{$u_{i+j-1}$}}
        \rput(0,1.3){\rnode{6}{$u_{i+j}$}}
        \rput(.3,2.65){\rnode{6}{$u_{i+j+1}$}}
        \cnode*(4.700,1.3){1.7pt}{i}
        \cnode*(4.172,2.67){1.7pt}{i-}
        \cnode*(4.172,0.03){1.7pt}{i+}
        \cnode*(1.046,0.03){1.7pt}{j-}
        \cnode*(0.518,1.3){1.7pt}{j}
        \cnode*(1.046,2.67){1.7pt}{j+}
        \ncarc[arcangle=10, linestyle=dotted,dotsep=2pt]{-}{i-}{i}
        \ncarc[arcangle=10, linestyle=dotted,dotsep=2pt]{-}{i}{i+}
        \ncline{->}{i}{j}
        \ncarc[arcangle=-25, linestyle=dotted,dotsep=2pt]{->}{i-}{j}
        \ncarc[arcangle=25, linestyle=dotted,dotsep=2pt]{->}{i+}{j}
        \ncarc[arcangle=10, linestyle=dotted,dotsep=2pt]{-}{j-}{j}
        \ncarc[arcangle=10, linestyle=dotted,dotsep=2pt]{-}{j}{j+}
        \ncarc[arcangle=-2]{->}{j}{i-}
        \ncarc[arcangle=2]{->}{j}{i+}
\end{pspicture}
\caption{The dotted arcs are not arcs of $D$.}
\label{Fig}
\end{figure}
Analogously, if $j\le n-3$, then $u_{i-1}u_{i+j}\notin A(D)$, see Figure \ref{Fig}. Since $C$ is an induced cycle in  $\overline{G}_D$, it follows that for  $u_iu_{i+j}\in A(D)$ 
\begin{equation}\label{i->i+j}
(a)\mbox{ if }j\ge3\mbox{, then }u_{i+j}u_{i+1}\in A(D),\qquad (b)\mbox{ if }j\le n-3\mbox{, then }u_{i+j}u_{i-1}\in A(D).
\end{equation}

First we prove that $D$ is  asymmetric, and then we determine the arcs of $D$.

For a contradiction, suppose that $\{u_i,u_{i+j}\}$ is a symmetric arc for some $j\ge 4$.
Since $u_iu_{i+j}\in A(D)$  it follows by \ref{i->i+j}$(a)$ that $u_{i+j}u_{i+1}\in A(D)$, see Figure \ref{Fig2}; since $u_{i+j}u_i\in A(D)$ and $n-((i+j)-i)\le n-4$, it follows by \ref{i->i+j}$(b)$ that $u_{i}u_{i+j-1}\in A(D)$, see Figure \ref{Fig2}. 
In this case $u_{i+j-1}u_{i+1}\notin A(D)$, because $D$ is locally in-semicomplete and $u_{i+j-1}$ and $u_{i+j}$ are not adjacent; and $u_{i+1}u_{i+j-1}\notin A(D)$, because $u_i$ and $u_{i+1}$ are not adjacent. 
Thus,  $\{u_{i+j-1},u_{i+1}\}\notin E(G_D)$, see Figure \ref{Fig2},
\begin{figure}[h!]
\centering
\begin{pspicture}(4.5,1.4)
        \psset{unit=.8, nodesep=3pt}     
        \rput(4.95,1.35){\rnode{1}{$u_{i+j-1}$}}
        \rput(5.3,0){\rnode{2}{$u_{i+j}$}}
        \rput(.45,1.35){\rnode{1}{$u_{i+1}$}}
        \rput(.15,0){\rnode{6}{$u_{i}$}}
        \cnode*(4.700,0){1.7pt}{i}
        \cnode*(4.172,1.37){1.7pt}{i-}
        \cnode*(.518,0){1.7pt}{j}
        \cnode*(1.046,1.37){1.7pt}{i+}
        \ncarc[arcangle=10, linestyle=dotted,dotsep=2pt]{-}{i-}{i}
        \ncarc[arcangle=-10]{->}{i}{i+}
        \ncarc[arcangle=10]{->}{i}{j}
        \ncarc[arcangle=-10]{<-}{i}{j}
        \ncarc[arcangle=8]{->}{j}{i-}
        \ncarc[arcangle=8,  linestyle=dotted,dotsep=2pt]{-}{j}{i+}
        \ncarc[arcangle=-15, linestyle=dashed,dash=5pt 2pt]{-}{i-}{i+}
\end{pspicture}
\caption{The dotted arcs are not arcs of $D$.}\label{Fig2}
\end{figure}
a contradiction because $j\ge 4$ and  $C$ is an induced cycle in  $\overline{G}_D$.
Thus, if $j\ge 4$, then $\{u_i,u_{i+j}\}$ is an asymmetric arc.
By a dual argumentation, using first \ref{i->i+j}$(b)$ and then \ref{i->i+j}$(a)$, we obtain that if $j\le n-4$, then $\{u_i,u_{i+j}\}$ is an asymmetric arc.

If $|V(D)|\ge7$, then every edge $\{u_i,u_{i+j}\}\in E(\overline{G}_D)$ is an asymmetric arc in $D$ because $j=(i+j)-i\ge4$ or $n-j=n-((i+j)-i)\ge4$, that is at least one of the induced paths in $\overline{G}_D$  $u_0u_1\dots u_j$ or $u_ju_{j+1}\dots u_0$ has $4$ arcs. Hence, $D$ is asymmetric if $|V(D)|\ge7$.

If $|V(D)|=6$, it follows that $\{u_i,u_{i+j}\}$ is asymmetric if $j=2,4$. 
For a contradiction, suppose that $\{u_0,u_3\}$ is an asymmetric arc. By \ref{i->i+j}$(a)$, $u_0u_2,u_0u_4,u_3u_1,u_3u_5\in A(D)$. In this case $u_{2}u_{5}\notin A(D)$, because $D$ is locally in-semicomplete, $u_3u_5\in A(D)$ and $u_{2}$ and $u_{3}$ are not adjacent in $D$; and $u_{5}u_{2}\notin A(D)$, because $u_0u_2\in A(D)$ and $u_0$ and $u_{5}$ are not adjacent in $D$. 
Thus,  $\{u_{2},u_{5}\}\notin E(G_D)$, a contradiction. 
Hence, $D$ is asymmetric.

In order to prove that $D\cong \overrightarrow{C}_{2m+1}(2,-3,4,-5,\dots, (-1)^mm)$, we need the following claim. 

\begin{claim}\label{Claim}
Let $j$ be an integer, with $2\le j\le n-2$. If  $u_ku_{k+j}\in A(D)$ for some vertex $u_k\in V(D)$, then  $u_iu_{i+j}\in A(D)$ for every vertex $u_i\in V(D)$.
\end{claim}

\begin{proofi}
Let $k,j$ be integers such that $u_ku_{k+j}\in A(D)$.
If $j=2,3$, then 
by $\ref{i->i+j}(b)$, it follows that $u_{k+j}u_{k-1}\in A(D)$, see Figure \ref{Figbis}$(b)$. 
Consider the arc $u_{k+j}u_{k-1}$, by $\ref{i->i+j}(b)$, it follows that $u_{k-1}u_{k-1+j}\in A(D)$. 
Consider the arc $u_{k-1}u_{k-1+j}$, by $\ref{i->i+j}(b)$, it follows that $u_{k-1+j}u_{k-2}\in A(D)$. 
Consider the arc $u_{k-1+j}u_{k-2}$, by $\ref{i->i+j}(b)$, it follows that $u_{k-2}u_{k-2+j}\in A(D)$. Continuing these two steps it follows that for $j=2,3$, if  $u_ku_{k+j}\in A(D)$ for some vertex $u_k\in V(D)$, then  $u_iu_{i+j}\in A(D)$ for every vertex $u_i\in V(D)$.
Analogously, using $\ref{i->i+j}(a)$, if $j=n-2,n-3$ and $u_ku_{k+j}\in A(D)$, then $u_iu_{i+j}\in A(D)$  for every  $u_i\in V(D)$.

\begin{figure}[h!]
\centering
\begin{pspicture}(10,2)
        \psset{unit=.8, nodesep=3pt}        
        \rput(2.6,0){\rnode{6}{$(a)~ j\ge 3$}}
        \rput(5,.6){\rnode{1}{$u_{k+1+j}$}}
        \rput(5.3,1.95){\rnode{2}{$u_{k+j}$}}
        \rput(.688,.6){\rnode{1}{$u_{k}$}}
        \rput(0,1.95){\rnode{6}{$u_{k+1}$}}
        \cnode*(4.700,1.97){1.7pt}{i}
        \cnode*(4.172,.6){1.7pt}{i-}
        \cnode*(.518,1.97){1.7pt}{j}
        \cnode*(1.046,.6){1.7pt}{i+}
        \ncarc[arcangle=-10, linestyle=dotted,dotsep=2pt]{-}{i-}{i}
        \ncarc[arcangle=10]{<-}{i}{i+}
        \ncarc[arcangle=-10]{->}{i}{j}
        \ncarc[arcangle=-8]{->}{j}{i-}
        \ncarc[arcangle=-8,  linestyle=dotted,dotsep=2pt]{-}{j}{i+}

        \rput(9.6,0){\rnode{6}{$(b)~ j\le n-3$}}
        \rput(12,1.95){\rnode{1}{$u_{k-1+j}$}}
        \rput(12.3,.6){\rnode{2}{$u_{k+j}$}}
        \rput(7.65,1.95){\rnode{1}{$u_{k}$}}
        \rput(7,.6){\rnode{6}{$u_{k-1}$}}
        \cnode*(11.700,.6){1.7pt}{i}
        \cnode*(11.172,1.97){1.7pt}{i-}
        \cnode*(7.518,.6){1.7pt}{j}
        \cnode*(8.046,1.97){1.7pt}{i+}
        \ncarc[arcangle=10, linestyle=dotted,dotsep=2pt]{-}{i-}{i}
        \ncarc[arcangle=-10]{<-}{i}{i+}
        \ncarc[arcangle=8]{->}{j}{i-}
        \ncarc[arcangle=8,  linestyle=dotted,dotsep=2pt]{-}{j}{i+}
        \ncarc[arcangle=15]{->}{i}{j}
        
\end{pspicture}
\caption{The dotted arcs are not arcs of $D$.}\label{Figbis}
\end{figure}
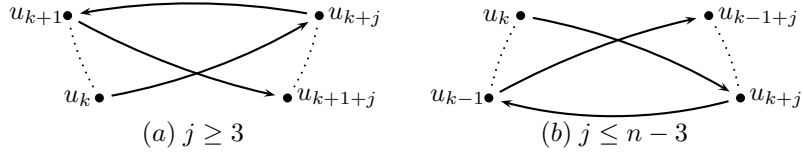

If $3< j< n-3$, then 
by $\ref{i->i+j}(a)$, it follows that $u_{k+j}u_{k+1}\in A(D)$, see Figure  \ref{Figbis}$(a)$.
Consider the arc $u_{k+j}u_{k+1}$, by $\ref{i->i+j}(a)$, it follows that $u_{k+1}u_{k+1+j}\in A(D)$.
Consider the arc $u_{k+1}u_{k+1+j}$, by $\ref{i->i+j}(a)$, it follows that $u_{k+1+j}u_{k+2}\in A(D)$.
Consider the arc $u_{k+1+j}u_{k+2}$, by $\ref{i->i+j}(a)$, it follows that $u_{k+2}u_{k+2+j}\in A(D)$. 
Continuing these two steps it follows that for $4\le j\le n-4$, if $u_ku_{k+j}\in A(D)$, then $u_iu_{i+j}\in A(D)$ for every  $u_i\in V(D)$.
\end{proofi}

If $u_0u_2\in A(D)$, then $u_3u_0\in A(D)$ and in this case $u_0u_4\in A(D)$. Continuing this argument we obtain that $u_0u_{2i}, u_{2i+1}u_0\in A(D)$. If the order of $D$ is equal to $n=2m$, then if $m$ is even $u_0u_{m}\in A(D)$ (if $m$ is odd, $u_{m}u_0\in A(D)$ resp.) and by Claim \ref{Claim} it follows that $u_iu_{i+m}\in A(D)$ ($u_{i+m}u_i\in A(D)$ resp.). In particular for $i=m$ we obtain that $u_{m}u_0\in A(D)$ ($u_0u_{m}\in A(D)$ resp.) implying that $\{u_0,u_{m}\}$ is a symmetric arc, a contradiction. Hence, the order of $D$ is odd. Let  $n=2m+1$. If $u_0u_2\in A(D)$, then by Claim \ref{Claim} it follows that $D\cong \overrightarrow{C}_{2m+1}(2,-3,\dots,(-1)^{m}(m))$. If $u_2u_0\in A(D)$, by a dual argument, we obtain  $D\cong \overrightarrow{C}_{2m+1}(-2,3,\dots,(-1)^{m-1}(m))$. Since $\overrightarrow{C}_{2m+1}(J)\cong \overrightarrow{C}_{2m+1}(-J)$, the result follows for in-semicomplete digraphs. 

The case when $D$ is a out-semicomplete digraph and $\overline{G}_D$ is an induced cycle on at least $6$ vertices is obtained by a dual argumentation, considering locally out-semicomplete digraphs instead of locally in-semicomplete digraphs and changing the orientation of all arcs in the proof. 
\end{proof}

We use the Strong Perfect Graph Theorem in \cite{chudnovsky2006strong} and Lemma \ref{semi ciclo} to characterize the locally in-semicomplete CKI-digraphs.

\begin{teo}\label{T in}
If $D$ is a  locally in-semicomplete CKI-digraph or a locally out-semicomplete CKI-digraph, then $D$ is a directed odd cycle, $D\cong \overrightarrow{C}_7(1,2)$ or $D\cong \overrightarrow{C}_n(1,\pm2,\pm3,\dots,\pm\lfloor \frac{n}{2}\rfloor)$, for some $n\ge4$.
\end{teo}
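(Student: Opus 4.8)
The plan is to branch on whether the underlying graph $G_D$ is perfect and, in the imperfect case, to use the Strong Perfect Graph Theorem together with Lemma \ref{semi ciclo}, which forces the orientation of any induced subdigraph whose underlying graph (or its complement) is an induced cycle. I treat $D$ locally in-semicomplete; the locally out-semicomplete case is identical, because every tool I invoke --- Lemma \ref{semi ciclo}, Theorem \ref{*} and Remarks \ref{CKI} and \ref{sub CKI} --- holds for both families, and the circulant delivered by Lemma \ref{semi ciclo} is literally the same digraph in both.

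First I would dispose of the perfect case: if $G_D$ is perfect, then Theorem \ref{*} gives $D\cong\overrightarrow{C}_3$ or $D\cong\overrightarrow{C}_n(1,\pm2,\dots,\pm\lfloor \frac{n}{2}\rfloor)$, and since $\overrightarrow{C}_3$ is a directed odd cycle, $D$ is of the asserted form. So assume $G_D$ is not perfect. By Theorem \ref{T Chud}, $G_D$ contains as an induced subgraph either an odd hole $C_{2k+1}$ ($k\ge2$) or an odd antihole $\overline{C_{2k+1}}$ ($2k+1\ge7$). Let $H$ be the subdigraph of $D$ induced on the vertices of this hole or antihole. As an induced subdigraph of a locally in-semicomplete digraph, $H$ is again locally in-semicomplete, so Lemma \ref{semi ciclo} applies to $H$.

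In the hole case $G_H$ is an induced cycle on $2k+1\ge5$ vertices, so Lemma \ref{semi ciclo} makes $H$ a directed odd cycle, which is CKI by Remark \ref{CKI}; since $D$ is CKI, Remark \ref{sub CKI} forbids a proper induced CKI-subdigraph, forcing $H=D$, so $D$ is a directed odd cycle. In the antihole case, writing $2n+1$ for the number of vertices, $\overline{G_H}$ is an induced cycle on $2n+1\ge7$ vertices, so Lemma \ref{semi ciclo} gives $H\cong\overrightarrow{C}_{2n+1}(2,-3,4,-5,\dots,(-1)^n n)$ with $n\ge3$. For $n=3$ this is $\overrightarrow{C}_7(2,-3)\cong\overrightarrow{C}_7(1,2)$, which is CKI by Remark \ref{CKI}, so Remark \ref{sub CKI} again forces $H=D\cong\overrightarrow{C}_7(1,2)$.

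The crux is to eliminate the antihole case for $n\ge4$, which I expect to be the main obstacle. The goal is to exhibit a proper induced directed odd cycle inside $H$: since $|V(D)|\ge2n+1>3$, such a cycle is a proper induced CKI-subdigraph of $D$, contradicting Remark \ref{sub CKI} and killing the subcase. To produce one I would first normalise the connection set: as $2n+1$ is odd, multiplication by the unit $2^{-1}=n+1$ of $\mathbb{Z}_{2n+1}$ is a digraph isomorphism $\overrightarrow{C}_{2n+1}(J)\cong\overrightarrow{C}_{2n+1}(2^{-1}J)$, and a short computation (splitting into even and odd indices $j$) shows $2^{-1}\{2,-3,4,\dots,(-1)^n n\}=\{1,2,\dots,n-1\}$, whence $H\cong\overrightarrow{C}_{2n+1}(1,2,\dots,n-1)$. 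In this representation the vertices $0,\,n-1,\,2(n-1)$ induce a copy of $\overrightarrow{C}_3$: the three forward differences are $n-1,\,n-1,\,3$, all in $\{1,\dots,n-1\}$ exactly because $n\ge4$ (this is the precise point at which $n=3$, i.e.\ $\overrightarrow{C}_7(1,2)$, is spared), while the three reverse differences are $n+2,\,n+2,\,2n-2$, none of them in $\{1,\dots,n-1\}$, so the triangle is induced and diagonal-free. This $\overrightarrow{C}_3$ supplies the required contradiction, and collecting the surviving cases gives exactly the directed odd cycles, $\overrightarrow{C}_7(1,2)$, and the complete circulants $\overrightarrow{C}_n(1,\pm2,\dots,\pm\lfloor \frac{n}{2}\rfloor)$. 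The essential difficulty is concentrated in this last computation; the reduction via the Strong Perfect Graph Theorem and the repeated use of Remark \ref{sub CKI} to force $H=D$ are routine.
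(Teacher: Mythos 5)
Your proposal is correct and follows essentially the same route as the paper's proof: split on whether $G_D$ is perfect (Theorem \ref{*}), apply the Strong Perfect Graph Theorem and Lemma \ref{semi ciclo} to the resulting hole or antihole, and use Remarks \ref{CKI} and \ref{sub CKI} to force the induced subdigraph to be all of $D$ or to derive a contradiction. The only difference is in the final computation eliminating antiholes with $n\ge4$: you renormalize the circulant to $\overrightarrow{C}_{2n+1}(1,2,\dots,n-1)$ via multiplication by $2^{-1}$ and exhibit the induced triangle on $\{0,\,n-1,\,2n-2\}$, whereas the paper finds an induced asymmetric $\overrightarrow{C}_3$ directly in $\overrightarrow{C}_{2n+1}(2,-3,4,\dots,(-1)^{n}n)$ from three odd integers $i,j,k\ge3$ with $i+j+k=2n+1$; both computations are valid.
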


\begin{proof}
Let $D$ be a  locally in-semicomplete CKI-digraph or a  locally out-semicomplete CKI-digraph. By Theorem \ref{*}, if $G_D$ is a perfect graph, then $D\cong \overrightarrow{C}_3$ or $D\cong \overrightarrow{C}_n(1,\pm2,\pm3,\dots,\pm\lfloor \frac{n}{2}\rfloor)$ for some $n\ge4$.
Assume that $G_D$ is not a perfect graph. By Theorem \ref{T Chud}, $G_D$ has an induced odd cycle $C$ on at least $5$ vertices or $\overline{G}_D$ has an induced odd cycle $C$ on at least $7$ vertices.
If $G_D$ has an induced odd cycle $C$, then by Lemma \ref{semi ciclo}, $D[C]$ is a directed odd cycle, which is CKI. Let ${C=u_0u_1\dots u_{2n}}u_0$. By {Remark \ref{subNIC}}, $D[C]$ is not a proper subdigraph. Hence, $D$ is a directed odd cycle.
If $\overline{G}_D$ has an induced odd cycle $C$, then by  Lemma \ref{semi ciclo}, $D\cong \overrightarrow{C}_{2n+1}(2,-3,4,-5,\dots, (-1)^nn)$.

If $n>3$, then $2n+1\ge 9$. Let $i,j,k\ge3$ be odd integers such that $i+j+k=2n+1$. Since $u_{i+j+k}=u_0$, we have the  proper  asymmetric $\overrightarrow{C}_3=u_0u_iu_{i+j}u_{i+j+k}$, which  contradicts Remark \ref{sub CKI}, because $\overrightarrow{C}_3$ and $D$ are CKI-digraphs.
Since $C$ has at least $7$ vertices,  $n=3$ and  $D[V(C)]=\overrightarrow{C}_{7}(2,-3)$, which is isomorphic to $\overrightarrow{C}_{7}(1,2)$. 
By Remark \ref{CKI}, $\overrightarrow{C}_{7}(1,2)$ is CKI, then by Remark \ref{sub CKI}, $D[V(C)]$ is not a proper subdigraph and $D\cong \overrightarrow{C}_{7}(1,2)$. Hence, if  $\overline{G}_D$ has an induced odd cycle, then $D$ is $\overrightarrow{C}_7(1,2)$, which is CKI.

Thus, if $D$ is a  locally in-/out-semicomplete CKI-digraph, then $D$ is a directed odd cycle, $D\cong \overrightarrow{C}_7(1,2)$ or $D\cong \overrightarrow{C}_n(1,\pm2,\pm3,\dots,\pm\lfloor \frac{n}{2}\rfloor)$, for some $n\ge4$.
\end{proof}

Characterizing the locally in-semicomplete CKI-digraphs and the locally out-semicomplete CKI-digraphs is not the same problem, because there are digraphs such that $D$ is CKI but $D^{-1}$ is not CKI, see \cite{duchet1981note}.  Although, in Theorem \ref{T in}, it turned out that the digraphs that characterize the locally in-semicomplete CKI-digraphs and the locally out-semicomplete CKI-digraphs remains the same. 
Using the fact that  locally semicomplete digraphs are both locally in-semicomplete and locally out-semicomplete, we have the following result as a corollary of Theorem \ref{T in}.
\begin{coro}[\cite{galeana2016characterization}]\label{T}
If $D$ is a locally semicomplete CKI-digraph, then $D$ is a directed odd cycle, $D\cong \overrightarrow{C}_7(1,2)$ or $D\cong \overrightarrow{C}_n(1,\pm2,\pm3,\dots,\pm\lfloor \frac{n}{2}\rfloor)$, for some $n\ge4$.
\end{coro}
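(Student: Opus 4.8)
The plan is to deduce the statement directly from Theorem \ref{T in} by exploiting the class inclusions among the families of generalized tournaments. The only ingredient beyond Theorem \ref{T in} is the observation, already recorded in the introduction, that every locally semicomplete digraph is simultaneously locally in-semicomplete and locally out-semicomplete; this is immediate from the definitions, since a locally semicomplete digraph is required to have both its in-neighborhoods and its out-neighborhoods induce a semicomplete digraph.

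First I would note that if $D$ is a locally semicomplete CKI-digraph, then $D$ is in particular a locally in-semicomplete digraph, and it of course remains CKI. Thus $D$ is a locally in-semicomplete CKI-digraph, so Theorem \ref{T in} applies verbatim: $D$ must be a directed odd cycle, isomorphic to $\overrightarrow{C}_7(1,2)$, or isomorphic to $\overrightarrow{C}_n(1,\pm2,\pm3,\dots,\pm\lfloor \frac{n}{2}\rfloor)$ for some $n\ge 4$. This already yields the full conclusion.

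Since there is genuinely nothing further to establish, I do not expect any obstacle: the entire structural work was carried out in Lemma \ref{semi ciclo} and Theorem \ref{T in}, where the Strong Perfect Graph Theorem (Theorem \ref{T Chud}) was used to split into the perfect-underlying-graph case (handled by Theorem \ref{*}) and the two non-perfect cases (an induced odd hole, forcing a directed odd cycle, and an induced odd antihole on at least seven vertices, forcing $\overrightarrow{C}_7(1,2)$). The corollary merely records that the same list governs the smaller class of locally semicomplete digraphs. If I wanted to be slightly more self-contained I could equally well invoke the locally out-semicomplete half of Theorem \ref{T in}, which produces the identical list; this confirms that the coincidence of the two characterizations (despite the fact, stressed earlier, that being CKI is not preserved under taking the converse digraph) carries over to the intersection class, which is exactly the content of Corollary \ref{T}.
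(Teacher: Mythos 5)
Your proposal is correct and matches the paper's own argument exactly: the paper derives Corollary \ref{T} from Theorem \ref{T in} by precisely the same observation, that a locally semicomplete digraph is both locally in-semicomplete and locally out-semicomplete, so the theorem applies verbatim. Nothing further is needed.
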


As a consequence of  Theorem \ref{T in} and Corollary  \ref{T}, it turns out that the characterizations of the CKI-digraphs of locally semicomplete digraphs and locally in-/out-semicomplete families are the same. Hence, we have the following result.
\begin{teo}\label{T in/out KP}
Locally in-semicomplete digraphs, locally out-semicomplete digraphs and locally semicomplete digraphs are KP if and only if they have no induced subdigraph $H$ such that $H$ is a directed odd cycle, $H\cong \overrightarrow{C}_7(1,2)$ or $H\cong \overrightarrow{C}_n(1,\pm2,\pm3,\dots,\pm\lfloor \frac{n}{2}\rfloor)$, for some $n\ge4$.
\end{teo}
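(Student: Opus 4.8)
The plan is to reduce the statement to the CKI-classifications already obtained in Theorem \ref{T in} and Corollary \ref{T}, combined with two elementary facts: that each of the three families is closed under taking induced subdigraphs, and that a digraph whose induced subdigraphs all have kernels is exactly a digraph with no induced CKI-subdigraph. First I would record the hereditary property. If $D$ is locally in-semicomplete and $H$ is an induced subdigraph of $D$, then for each $v\in V(H)$ the in-neighborhood $N_H^-(v)=N_D^-(v)\cap V(H)$ induces in $H$ a subdigraph of the semicomplete digraph induced by $N_D^-(v)$ in $D$; since an induced subdigraph of a semicomplete digraph is semicomplete, $H$ is again locally in-semicomplete. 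The dual argument, and the combination of both neighborhoods, gives the same conclusion for locally out-semicomplete and locally semicomplete digraphs.

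For the forward implication, suppose $D$ is KP. By definition every induced subdigraph of $D$, including $D$ itself, has a kernel. By Remark \ref{CKI} the directed odd cycles and $\overrightarrow{C}_7(1,2)$ are CKI, and by Theorem \ref{T semi} each $\overrightarrow{C}_n(1,\pm2,\pm3,\dots,\pm\lfloor \frac{n}{2}\rfloor)$ is CKI; hence none of the listed digraphs possesses a kernel. Consequently $D$ can contain no induced copy of any of them, since such a copy would be an induced subdigraph of $D$ lacking a kernel, contradicting that $D$ is KP.

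For the converse implication I would argue by contradiction. Assume $D$, belonging to one of the three families, contains no induced subdigraph isomorphic to a directed odd cycle, to $\overrightarrow{C}_7(1,2)$, or to $\overrightarrow{C}_n(1,\pm2,\pm3,\dots,\pm\lfloor \frac{n}{2}\rfloor)$, and suppose $D$ is not KP. Then some induced subdigraph of $D$ has no kernel; choose such a subdigraph $H$ with the fewest vertices. By minimality every proper induced subdigraph of $H$ has a kernel while $H$ itself does not, so $H$ is CKI. By the hereditary property established above, $H$ lies in the same family as $D$, so Theorem \ref{T in} (or Corollary \ref{T} in the locally semicomplete case) forces $H$ to be one of the three listed digraphs, contradicting the hypothesis. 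Therefore no such $H$ exists, so $D$ has a kernel and every proper induced subdigraph of $D$ has a kernel; that is, $D$ is KP.

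I do not expect a serious obstacle here: the argument is essentially bookkeeping built on the already-proved CKI-classification. The only points demanding care are the first paragraph, namely verifying that each of the three classes is closed under passing to induced subdigraphs (so that the CKI-classification applies to $H$), and the minimal-counterexample step, where one must confirm that the smallest induced subdigraph without a kernel is genuinely CKI and therefore captured by Theorem \ref{T in} or Corollary \ref{T}.
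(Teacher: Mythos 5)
Your proposal is correct and takes essentially the same approach as the paper, which states this theorem as an immediate consequence of Theorem \ref{T in} and Corollary \ref{T} without writing out the details. The steps you make explicit---that the three classes are hereditary under induced subdigraphs, that the listed digraphs are CKI (by Remark \ref{CKI} and Theorem \ref{T semi}) and hence kernel-free, and that a non-KP digraph contains a vertex-minimal induced subdigraph without a kernel, which is necessarily CKI and thus covered by the classification---are precisely the bookkeeping the paper leaves implicit.
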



\section{Generalized bipartite tournaments}

In this section we characterize the arc-locally in-semicomplete, arc-locally out-semicomplete, $3$-quasi-transitive and $3$-anti-quasi-transitive  CKI-digraphs. 
The following result is a reformulation of the original result.
\begin{teo}[Theorem 4.3 \cite{galeana1984kernels}]\label{VH} Let $D$ be a CKI-digraph, which is not a directed odd cycle. For every vertex $u_0\in V(D)$ there is a directed cycle $C=u_0u_1\dots u_{2n}u_0$ such that $C$ has no diagonal $u_i u_j$ with $j\in\{0\}\cup \{1,3,\dots,2n-1\}$, $i\in \{0,1,\dots,2n\}$.
\end{teo}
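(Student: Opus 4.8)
The plan is to combine two classical facts about critical kernel imperfect digraphs with an extremal choice of the cycle. First I would recall (and, if needed, reprove) the Neumann-Lara extension lemma: if every proper induced subdigraph of $D$ has a kernel and $D$ has a nonempty \emph{semikernel} $S$ (an independent set such that every out-neighbour of $S$ is also an in-neighbour of $S$), then $D$ has a kernel, namely $S\cup K'$ where $K'$ is a kernel of $D-(S\cup N^-(S))$. Hence a CKI-digraph has no nonempty semikernel. By the companion argument ``extend a kernel of a terminal strong component'' one also gets that a CKI-digraph is strongly connected. I will use both facts throughout.

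Next, fix $u_0\in V(D)$ and let $K$ be a kernel of the proper subdigraph $D-u_0$. Since $D$ itself has no kernel, $K$ is not a kernel of $D$, which forces $u_0\notin K$, $N^+(u_0)\cap K=\emptyset$ (otherwise $K$ absorbs $u_0$ and is a kernel of $D$), and the existence of an arc $au_0\in A(D)$ with $a\in K$ (otherwise $K\cup\{u_0\}$ is a kernel of $D$); moreover $K$ is independent and absorbs every vertex of $V(D)\setminus(K\cup\{u_0\})$. I would then build a directed cycle through $u_0$ that ``bounces off'' $K$: starting from an out-neighbour $u_1$ of $u_0$ (which lies outside $K$ and is therefore absorbed by $K$), every time the walk leaves $K$ it is returned to $K$ by absorption, and every time it sits in $K$ it leaves $K$ by independence; strong connectivity lets me steer the walk toward the vertex $a$ and close up at $u_0$. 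This yields a cycle $C=u_0u_1\cdots u_{2n}u_0$ whose even-indexed vertices $u_2,u_4,\dots,u_{2n}$ lie in $K$ and whose odd-indexed vertices $u_1,u_3,\dots,u_{2n-1}$ lie outside $K$. This $K/\textrm{non-}K$ alternation is exactly what makes the length $2n+1$ \emph{odd} and identifies the forbidden targets $\{u_0\}\cup\{u_1,u_3,\dots,u_{2n-1}\}$ with the non-$K$ vertices of $C$. Note that independence of $K$ already rules out any diagonal between two even-indexed vertices, so the even (allowed) targets cause no trouble.

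I would then choose $C$ of minimum length among all such $K$-alternating cycles through $u_0$, and try to show it has no diagonal into a non-$K$ vertex. A \emph{forward} diagonal $u_iu_j$ with $u_i\in K$ (even $i$) and $u_j\notin K$ ($j$ odd) can be spliced with the cycle, skipping $u_{i+1},\dots,u_{j-1}$: the $K/\textrm{non-}K$ pattern is preserved across the splice and the number of deleted vertices $j-i-1$ is even, so the result is a strictly shorter $K$-alternating odd cycle through $u_0$, contradicting minimality; the same splice disposes of any diagonal $u_iu_0$ with $u_i\in K$. Thus minimality removes all the diagonals that \emph{come out of} $K$.

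The hard part is the remaining case: backward diagonals into an odd-indexed vertex, and diagonals into a non-$K$ vertex that originate at another non-$K$ vertex. For these the cycle one naturally produces by splicing either avoids $u_0$ or has even length, so plain length-minimality does not exclude them, and this is the crux of the whole argument. I expect to eliminate them by a more delicate, simultaneously extremal choice of the pair $(K,C)$ and by pushing semikernel-freeness: a surviving bad diagonal should let me reroute the bounce into a shorter alternating cycle, or else assemble from the odd vertices (or from $K$ shifted along the diagonal) a nonempty semikernel of $D$, contradicting the CKI hypothesis. Making this ``bad diagonal yields a semikernel'' step precise is where the real work lies, and it is the step I would expect to be the main obstacle.
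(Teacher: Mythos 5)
There is nothing in the paper to compare your attempt against: Theorem \ref{VH} is not proved in this paper at all, it is imported (as a reformulation) from Theorem 4.3 of \cite{galeana1984kernels}, so your proposal must be judged as a standalone proof. As such it has a genuine gap, and you identify it yourself. Your scaffolding is sound: the semikernel lemma, strong connectivity of CKI-digraphs, the kernel $K$ of $D-u_0$ with $N^+(u_0)\cap K=\emptyset$ and some arc from $K$ to $u_0$, and the $K$-alternating odd cycle through $u_0$ are all correct and are the right ingredients. But length-minimality only kills diagonals that start at an even-indexed ($K$) vertex and point forward, or go from such a vertex to $u_0$. The theorem also forbids arcs from odd-indexed (non-$K$) vertices into odd-indexed vertices or into $u_0$, and backward arcs from $K$-vertices into odd positions; for every one of these, splicing yields either an even closed walk through $u_0$ or an odd cycle avoiding $u_0$, so minimality gives no contradiction, exactly as you observe. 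Your plan for this case (``a more delicate, simultaneously extremal choice of $(K,C)$'', ``assemble \dots a nonempty semikernel'') is not carried out even in outline: no candidate semikernel is exhibited and no modified extremal order is specified. Since the easy cases follow from independence and absorption alone, this unhandled case is the entire content of the theorem, and deferring it means the statement remains unproved.

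There is also a smaller flaw earlier, in the construction itself. Strong connectivity gives a directed path back toward $u_0$, but nothing makes that path alternate with respect to $K$, so you cannot ``steer the walk \dots and close up at $u_0$'' as claimed. The repair uses precisely the semikernel-freeness you already have: let $E$ be the set of $K$-vertices reachable from $u_0$ by $K$-alternating walks. If no vertex of $E$ sends an arc to $u_0$, then $\{u_0\}\cup E$ is independent (because $N^+(u_0)\cap K=\emptyset$, $K$ is independent, and there are no arcs from $E$ to $u_0$), and every out-neighbour of $\{u_0\}\cup E$ is a non-$K$ vertex distinct from $u_0$, hence it is absorbed by $K$ into $E$; thus $\{u_0\}\cup E$ is a nonempty semikernel of $D$, contradicting the CKI hypothesis. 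Hence some alternating walk closes at $u_0$ with odd length, and a shortest such closed walk is a cycle, since a repeated vertex always allows deleting an even number of arcs while keeping $u_0$ and the alternation. With this fix your construction and your easy diagonal eliminations stand; the gap of the previous paragraph, however, remains, and that is where the theorem actually lives.
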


\begin{teo}\label{VH min} Let $D$ be a CKI-digraph, which is not a directed odd cycle. For every vertex ${u_0\in V(D)}$ there exist a directed cycle $C=u_0u_1\dots u_{2n}u_0$ with $n\ge2$ such that
\begin{enumerate}
\item[$i)$] $C$ has a diagonal.
\item[$ii)$] $C$ has no diagonal $u_i u_j$ with $j\in\{0\}\cup \{1,3,\dots,2n-1\}$, $i\in \{0,1,\dots,2n\}$.
\item[$iii)$] $C$ has no diagonal $u_{2i-1}u_{2j}$ with $0<i<j\le n$. \label{subciclo}
\end{enumerate}
\end{teo}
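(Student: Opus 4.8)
The plan is to start from the cycle produced by Theorem~\ref{VH}, which already gives condition $ii)$, and to obtain $i)$, $iii)$ and the bound $n\ge 2$ by passing to a cycle of \emph{minimum length}. Concretely, I fix $u_0$, invoke Theorem~\ref{VH} to get at least one directed odd cycle through $u_0$ satisfying $ii)$, and then choose, among all directed odd cycles through $u_0$ that satisfy $ii)$, one of minimum length $C=u_0u_1\dots u_{2n}u_0$. All three remaining conclusions will be read off from this minimality.

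Condition $i)$ holds for \emph{every} directed odd cycle in $D$: if $C$ had no diagonal, then $D[V(C)]=C$ would be an induced directed odd cycle, hence an induced CKI-subdigraph by Remark~\ref{CKI}. Since $D$ is not a directed odd cycle, this subdigraph is proper, contradicting Remark~\ref{sub CKI}. Hence $C$ carries a diagonal.

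For $iii)$ I argue by minimality. Suppose $C$ had a diagonal $u_{2i-1}u_{2j}$ with $0<i<j\le n$. Deleting the vertices $u_{2i},\dots,u_{2j-1}$ and inserting this diagonal yields the directed cycle
\[
C'=u_0u_1\dots u_{2i-1}u_{2j}u_{2j+1}\dots u_{2n}u_0 ,
\]
again a directed cycle through $u_0$; it is strictly shorter, and since the number $2(j-i)$ of deleted vertices is even, its length $2n-2(j-i)+1$ is still odd. The crucial point is that, $2(j-i)$ being even, the relabelling of $C'$ preserves the parity of every surviving index and keeps $u_0$ at position $0$. Therefore the only new cycle arc, $u_{2i-1}u_{2j}$, lands on an even position, and any remaining diagonal of $C'$ violates $ii)$ precisely when the corresponding diagonal of $C$ does. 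As $C$ satisfies $ii)$, so does $C'$, contradicting the minimality of $C$. Hence no diagonal $u_{2i-1}u_{2j}$ with $i<j$ exists, which is exactly $iii)$.

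Finally, $n\ge 2$ is where the real work lies, and I expect it to be the main obstacle. If $n=1$ then $C=u_0u_1u_2u_0$ is a directed triangle which, by $i)$, has a diagonal; by $ii)$ that diagonal can be neither $u_1u_0$ (target $u_0$) nor $u_2u_1$ (target $u_1$), so it must be $u_0u_2$, and together with the cycle arc $u_2u_0$ this forces a digon on $\{u_0,u_2\}$. Thus the whole difficulty reduces to excluding a minimal directed triangle carrying this forced digon. In the asymmetric setting in which the theorem is applied the digon is impossible by definition, so $n=1$ cannot occur and the minimal cycle has $n\ge2$; more generally one must rule out this triangle-with-digon configuration directly, and this is the step I would expect to require the most care, since minimality and the shortening move of $iii)$ cannot by themselves push the length past $3$. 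Once the triangle is excluded, the minimal cycle $C$ satisfies $i)$, $ii)$, $iii)$ and $n\ge2$, as required.
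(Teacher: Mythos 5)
Your treatment of $i)$, $ii)$ and $iii)$ is exactly the paper's own proof: invoke Theorem~\ref{VH} to get a cycle through $u_0$ satisfying $ii)$, pass to one of minimum length among these, observe that $i)$ holds because a diagonal-free directed odd cycle would be a proper induced CKI-subdigraph (Remark~\ref{sub CKI}), and derive $iii)$ from minimality via the shortcut $C'=u_0u_1\dots u_{2i-1}u_{2j}u_{2j+1}\dots u_{2n}u_0$. If anything you are more rigorous than the paper, which merely asserts that $C'$ satisfies $ii)$; your bookkeeping (an even number of vertices is deleted, so surviving indices keep their parity, $u_0$ keeps position $0$, and every diagonal of $C'$ is a diagonal of $C$) is the verification left implicit there.

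Concerning $n\ge 2$: your suspicion is justified, and you should know that the paper's proof never addresses this bound at all. Your reduction of the $n=1$ case is correct: $i)$ and $ii)$ force the unique diagonal $u_0u_2$, hence a digon on $\{u_0,u_2\}$. Moreover, this configuration cannot be excluded at this level of generality: the digraph $\overrightarrow{C}_4(1,\pm2)$ is CKI by Theorem~\ref{T semi}, is not a directed odd cycle, and has only four vertices, so it contains no directed cycle with $n\ge2$ whatsoever; through the vertex $0$ the directed triangle $0\to 1\to 2\to 0$, whose unique diagonal is $0\to 2$, is the only cycle satisfying $ii)$. Thus the bound $n\ge2$ is not provable (indeed not true) for arbitrary CKI-digraphs; it becomes valid precisely under the extra hypothesis you point to, namely asymmetry, which forbids the digon --- and that is the only setting in which the paper actually uses Theorem~\ref{VH min} (its Theorem~\ref{T1}, where the claim that $C$ has at least five vertices is justified by exactly your argument). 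In short, your proposal matches the paper on everything the paper proves, and the step you flag as the obstacle is a defect of the stated theorem and its published proof rather than of your argument.
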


\begin{proof}
$i)$ follows by Remark \ref{subNIC}. By Theorem \ref{VH}, for each vertex $u_0\in V(D)$ there is a directed cycle $C=u_0u_1\dots u_{2n}u_0$ such that $C$ has no diagonal $u_i u_j$ with $j\in\{0\}\cup \{1,3,\dots,2n-1\}$ and $i\in \{0,1,\dots,2n\}$. Hence, $ii)$ follows. Consider a directed cycle $C$ satisfying $ii)$ of minimum length.
If $C$ has a diagonal $u_{2i+1}u_{2j}$ with $0<i<j\le n$, then $C'=u_0u_1\dots u_{2i+1}u_{2j}\dots u_{2n}u_0$ is a cycle satisfying $ii)$ and $C'$ is shorter than $C$, contradicting the choice of $C$ and $iii)$ is proved. 
\end{proof}

A not necessarily directed path $P=uvwx$ is an $H_1$-path if $u\rightarrow v\leftarrow w\leftarrow x$; an $H_2$-path if  $u\leftarrow v\leftarrow w\rightarrow x$; an $H_3$-path if $P$ is a directed path and an $H_4$-path if $P$ is an anti-directed path.
For $i=1,2,3,4$, Bang-Jensen defined a digraph to be $H_i$\emph{-free}, if every $H_i$-path $uvwx$ has an arc between $u$ and $x$.
In \cite{bang2004structure} the arc-locally in-/out-semicomplete and the $3$-quasi-(anti-)transitive digraphs where  defined in terms of $H_i$-free digraphs, $i=1,2,3,4$.
The family of $H_1$-free digraphs ($H_2$-free digraphs, resp.) is the family of arc-locally in-semicomplete digraphs (arc-locally out-semicomplete digraphs, resp.).
The $H_3$-free digraphs are the $3$-quasi-transitive digraphs and
the $H_4$-free digraphs are the $3$-anti-quasi-transitive digraphs. We denote by $TT_3$ the transitive (acyclic) tournament on $3$ vertices.

\begin{teo}\label{T1}
The unique asymmetric $3$-quasi-transitive CKI-digraph is $\overrightarrow{C}_3$. The directed odd cycles are the only asymmetric CKI-digraphs which are arc-locally in-semicomplete digraphs, arc-locally out-semicomplete digraphs or $3$-anti-quasi-transitive $TT_3$-free digraphs.
\end{teo}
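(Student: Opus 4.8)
The plan is to prove the two directions separately for each of the four families, treating the forward inclusions as routine and concentrating all the work on the converse, where Theorem \ref{VH min} is the engine. For the forward direction: a directed odd cycle is CKI by Remark \ref{CKI} and is asymmetric, and since each of its vertices has in-degree and out-degree $1$, it has no vertex with two in-neighbours, no vertex with two out-neighbours, and no anti-directed path of length $3$; hence it is vacuously $H_1$-, $H_2$- and $H_4$-free and contains no $TT_3$. Similarly $\overrightarrow{C}_3$ is an asymmetric CKI-digraph with no directed $4$-path, so it is $H_3$-free. Thus each listed digraph really belongs to the corresponding family, and only the completeness of the list remains.

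For the converse, suppose $D$ is an asymmetric CKI-digraph lying in one of the families and assume, for a contradiction, that $D$ is \emph{not} a directed odd cycle. In the $3$-quasi-transitive case this hypothesis follows already from ``$D\not\cong\overrightarrow{C}_3$'': any $\overrightarrow{C}_m$ with $m\ge5$ contains the directed $4$-path $u_0u_1u_2u_3$ whose ends $u_0,u_3$ are non-adjacent, so $\overrightarrow{C}_3$ is the only $3$-quasi-transitive directed odd cycle. Now, because $D$ is asymmetric, a directed triangle of $D$ has no diagonal, so the minimum-length cycle in the proof of Theorem \ref{VH min} cannot be a triangle; hence Theorem \ref{VH min} supplies a directed cycle $C=u_0u_1\dots u_{2n}u_0$ with $n\ge2$, a diagonal $u_au_b$, and the restrictions $(ii)$ and $(iii)$ (in particular $u_b$ has even index and $u_b\ne u_0$). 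I would use asymmetry a second time in a decisive way: a diagonal can never join two cycle-consecutive vertices, for otherwise that pair would carry a symmetric arc; so every diagonal joins vertices at cycle-distance at least $2$, which is exactly what disposes of the boundary exceptions below.

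The core move is to feed $u_au_b$ into the closure property of the family and read off a new, forbidden arc. For $3$-quasi-transitivity this is immediate, as noted above: the directed $4$-path $u_0u_1u_2u_3$ forces an arc between $u_0$ and $u_3$, but such an arc ends at $u_3$ (odd index) or at $u_0$, both excluded by $(ii)$. For an arc-locally in-semicomplete $D$ I would apply the defining property to the arc $u_au_b$ with the cycle predecessors $u_{a-1}\in N^-(u_a)$ and $u_{b-1}\in N^-(u_b)$: these must be adjacent; the arc $u_{a-1}u_{b-1}$ ends at the odd index $b-1$ and is killed by $(ii)$, so $u_{b-1}u_{a-1}$ is forced, and a short parity check shows this last arc is itself forbidden by $(ii)$ or $(iii)$, the only surviving exceptions being the cycle-consecutive diagonals already excluded by asymmetry. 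The arc-locally out-semicomplete case is the exact dual, using the successors $u_{a+1},u_{b+1}$ and the same two conditions. In each of these cases we reach a contradiction, forcing $D$ to be a directed odd cycle (respectively $\overrightarrow{C}_3$).

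The main obstacle is the $3$-anti-quasi-transitive $TT_3$-free case, and this is where the extra hypothesis becomes indispensable. The diagonal $u_au_b$ together with the cycle arcs $u_au_{a+1}$ and $u_{b-1}u_b$ produces the anti-directed $4$-path $u_{a+1}u_au_bu_{b-1}$, so $H_4$-freeness forces $u_{a+1}$ and $u_{b-1}$ to be adjacent; unlike before, the resulting arc need not be forbidden outright. The plan is to first use $TT_3$-freeness to eliminate the short diagonals $u_au_{a+2}$ (which would complete the transitive triangle $u_au_{a+1}u_{a+2}$ with two cycle arcs) and Remark \ref{subNIC} to eliminate the ones producing an induced $\overrightarrow{C}_3$, and then to run the parity/$(ii)$/$(iii)$ analysis on the forced arc $u_{a+1}u_{b-1}$ for all remaining positions of the ends modulo $2n+1$, discarding the cycle-consecutive possibilities by asymmetry. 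This is precisely the step that rules out digraphs such as $\overrightarrow{C}_7(1,2)$, an asymmetric $3$-anti-quasi-transitive CKI-digraph that is not a directed odd cycle; dropping $TT_3$-freeness would leave it as a genuine exception. I expect the difficulty here to be bookkeeping rather than any single idea: a careful, exhaustive location of $u_{a+1}$ and $u_{b-1}$ on the cycle, with asymmetry and $TT_3$-freeness peeling off the cases one by one until the forbidden-diagonal contradiction closes each branch.
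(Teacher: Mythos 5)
Your forward directions and your handling of three of the four families are sound, and for those three you follow essentially the same route as the paper: take the cycle $C$ of Theorem \ref{VH min}, feed a diagonal $u_au_b$ (whose head $u_b$ has even nonzero index by $(ii)$, and whose tail is constrained by $(iii)$) into the $H_i$-freeness condition together with the cycle predecessors (for $H_1$) or successors (for $H_2$), and check that the forced arc violates $(ii)$ or $(iii)$. Your parity analysis does close every branch there, including the wraparound case $a=0$, where the forced arc $u_{b-1}u_{2n}$ is killed by $(iii)$ rather than $(ii)$; and your $H_3$ argument (directed $4$-path along $C$ forces an arc into $u_3$ or into $u_0$, both forbidden by $(ii)$) is the same move as the paper's, which uses $u_{2n-1}u_{2n}u_0u_1$ instead. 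The difference from the paper's organization (its case split on whether the diagonal induces a directed cycle containing $u_0u_1$) is cosmetic.

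The gap is in the $3$-anti-quasi-transitive $TT_3$-free case, and it is not bookkeeping. Your elimination tools do not cover the diagonal $u_{2n}u_2$, the wraparound instance of $b=a+3$: it is not of the form $u_au_{a+2}$, so $TT_3$-freeness says nothing; it is not excluded by $(ii)$ (its head has even index $2$), nor by $(iii)$ (its tail $u_{2n}$ has even index, while $(iii)$ only forbids odd-to-even diagonals), nor by asymmetry (it is antiparallel to no cycle arc); and the $H_4$-path it generates, $u_0u_{2n}u_2u_1$, forces adjacency between $u_0$ and $u_1$ --- which already holds via the cycle arc $u_0u_1$, so no forbidden diagonal is produced at all. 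Note that every other position of the diagonal where the forced pair $\{u_{a+1},u_{b-1}\}$ is cycle-consecutive is the non-wraparound $b=a+3$, which $(ii)$ and $(iii)$ already forbid, so $u_{2n}u_2$ is exactly the one surviving configuration. Your induced-$\overrightarrow{C}_3$ remark disposes of it only when $n=2$ (where $u_2u_3u_4u_2$ is a triangle, necessarily induced by asymmetry); for $n\ge3$ the directed cycle it closes, $C'=u_2u_3\dots u_{2n}u_2$, is a proper odd cycle of length $2n-1\ge5$. The paper needs a genuinely separate idea precisely here: by Remark \ref{subNIC}, $C'$ cannot be an induced subdigraph, hence $C'$ has a diagonal; that diagonal is also a diagonal of $C$, but now in a non-exceptional position (since $u_{2n}u_2$ is an arc of $C'$, not a diagonal of it), and re-running the $H_4$ argument on it yields the contradiction with $(ii)$. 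Without this recursion through the shorter odd cycle, your case analysis leaves the configuration ``$C$ plus the single diagonal $u_{2n}u_2$'' standing, so the proof for the fourth family is incomplete.
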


\begin{proof}
The asymmetric $3$-quasi-transitive digraph $\overrightarrow{C}_3$ is CKI.
Suppose, for a contradiction, that  $D$ is an asymmetric $3$-quasi-transitive CKI-digraph, which is not $\overrightarrow{C}_3$.
By hypothesis $D$ is $H_3$-free and since directed odd cycles of order at least $5$ have induced directed paths of order $4$, $D$ is not an odd cycle. 
Let $u_0\in V(D)$. By Theorem \ref{VH min}, $D$ has a directed odd cycle of minimum length $C=u_0u_1\dots u_{2n}u_0$, $n\ge2$, such that $C$ has no diagonal $u_i u_j$ with $i\in\{0,1,\dots,2n\}$ and $j\in\{0\}\cup\{1,3,5,\dots,2n-1\}$.
Since $D$ is CKI, $C$ is not induced and $C$ has at least five vertices. By definition, the $H_3$-path $u_{2n-1}u_{2n}u_0u_1$ must have a diagonal between  the vertices $u_{2n-1}$ and $u_1$, that is a diagonal between two vertices with odd subindices, contradicting the choice of $C$ in Theorem \ref{VH min} $ii)$. Hence, the unique asymmetric $3$-quasi-transitive CKI-digraphs is $\overrightarrow{C}_3$.

Directed odd cycles are asymmetric arc-locally in-semicomplete (arc-locally out-semicomplete resp.) [$3$-anti-quasi-transitive $TT_3$-free resp.] CKI-digraphs.
Suppose, for a contradiction, that $D$ is an asymmetric arc-locally in-semicomplete (arc-locally out-semicomplete resp.) [$3$-anti-quasi-transitive $TT_3$-free resp.]  CKI-digraph, which is not a directed odd cycle. Let $u_0\in V(D)$. By Theorem \ref{VH min}, $D$ has a directed odd cycle of minimum length $C=u_0u_1\dots u_{2n}u_0$, $n\ge2$, such that $C$ has no diagonal $u_i u_j$ with $i\in\{0,1,\dots,2n\}$ and $j\in\{0\}\cup\{1,3,5,\dots,2n-1\}$. Observe that $C$ has at least five vertices. Let  $vw$ be a diagonal, with $v,w\in \{u_0,u_1,\dots, u_{2n}\}$ and the index are taken modulo $2n+1$.

We have two cases: $(i)$ The cycle $C$ and its diagonal $vw$ induce a directed cycle containing $u_0u_1$ or $(ii)$ the cycle $C$ and its diagonal $vw$ induce a directed cycle avoiding  $u_0u_1$.

$(i)$ The directed cycle $C$ and its diagonal $vw$ induce a directed cycle containing the arc $u_0u_1$. \\
By Theorem \ref{VH min} $ii)$ and $iii)$, the diagonal is $vw$=$u_{2i}u_{2j}$ with $0< i<j\le n$ ($0< i<j\le n$ resp.) [$1< i+1<j\le n$ resp.]. By definition, the $H_1$-path $u_{2i-1}u_{2i}u_{2j}u_{2j-1}$ must have a diagonal between the vertices  $u_{2i-1}$ and $u_{2j-1}$ (the $H_2$-path $u_{2i+1}u_{2i}u_{2j}u_{2j+1}$ must have a diagonal between the vertices  $u_{2i+1}$ and $u_{2j+1}$ resp.) [the $H_4$-path $u_{2j-1}u_{2j}u_{2i}u_{2i+1}$ must have a diagonal between the vertices  $u_{2i+1}$ and $u_{2j-1}$ resp., ($2i+1<2j-1$ because $D$ is $TT_3$-free)], in each case $vw$ is a diagonal between two vertices with  subindices which are odd or equal to zero (if $j=n$, then $2j+1\equiv 0~(mod~ 2n+1)$), contradicting the choice of $C$ in Theorem \ref{VH min} $ii)$. Hence, $C$ has no diagonal inducing a directed cycle containing the arc $u_0u_1$.

$(ii)$ The directed cycle $C$ and its diagonal $vw$ induce a directed cycle avoiding the arc $u_0u_1$. \\
By the choice of $C$, we may assume that $vw=u_{i}u_{2j}$ with $i=0$ and $1\le j< n$ ($1\le j< n$ resp.) [since $D$ is $TT_3$-free, $1< j< n$ resp.] or $i\neq0$ and $2j+2< i\le 2n$ (by Remark \ref{subNIC}, $D$ has no $\overrightarrow{C}_3$).

If $i=0$, since $vw$ is a diagonal, it follows that $1\le j< n$ ($1\le j< n$ resp.) [since $D$ is $TT_3$-free, $1< j< n$ resp.].
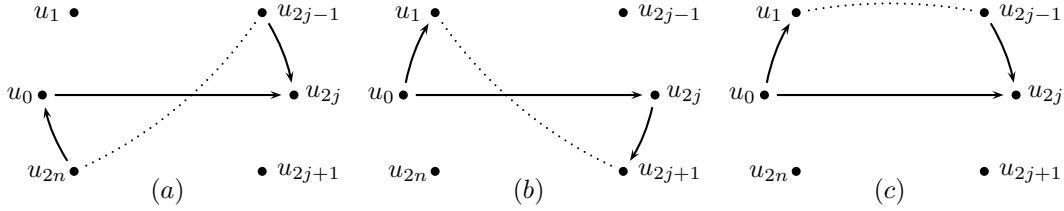
\begin{figure}[h!]
\centering
\begin{pspicture}(14,2.5)
        \psset{unit=.8, nodesep=3pt}        
        \rput(4.95,2.65){\rnode{1}{$u_{2j-1}$}}
        \rput(5.2,1.3){\rnode{2}{$u_{2j}$}}
        \rput(4.95,0){\rnode{3}{$u_{2j+1}$}}
        \rput(.6,0){\rnode{6}{$u_{2n}$}}
        \rput(0.15,1.3){\rnode{6}{$u_{0}$}}
        \rput(.6,2.65){\rnode{6}{$u_{1}$}}
        \rput(2.6,-.3){\rnode{6}{$(a)$}}
        \cnode*(4.700,1.3){1.7pt}{i}
        \cnode*(4.172,2.67){1.7pt}{i-}
        \cnode*(4.172,0.03){1.7pt}{i+}
        \cnode*(1.046,0.03){1.7pt}{j-}
        \cnode*(0.518,1.3){1.7pt}{j}
        \cnode*(1.046,2.67){1.7pt}{j+}
        \ncarc[arcangle=10]{->}{i-}{i}
        \ncline{<-}{i}{j}
        \ncarc[arcangle=10]{->}{j-}{j}
        \ncarc[arcangle=-15, linestyle=dotted,dotsep=2pt]{-}{j-}{i-}
        \rput(10.95,2.65){\rnode{1}{$u_{2j-1}$}}
        \rput(11.2,1.3){\rnode{2}{$u_{2j}$}}
        \rput(10.95,0){\rnode{3}{$u_{2j+1}$}}
        \rput(6.6,0){\rnode{6}{$u_{2n}$}}
        \rput(6.15,1.3){\rnode{6}{$u_{0}$}}
        \rput(6.6,2.65){\rnode{6}{$u_{1}$}}
        \rput(8.6,-.3){\rnode{6}{$(b)$}}
        \cnode*(10.700,1.3){1.7pt}{i}
        \cnode*(10.172,2.67){1.7pt}{i-}
        \cnode*(10.172,0.03){1.7pt}{i+}
        \cnode*(7.046,0.03){1.7pt}{j-}
        \cnode*(6.518,1.3){1.7pt}{j}
        \cnode*(7.046,2.67){1.7pt}{j+}
        \ncarc[arcangle=10]{->}{i}{i+}
        \ncline{<-}{i}{j}
        \ncarc[arcangle=15, linestyle=dotted,dotsep=2pt]{-}{i+}{j+}
        \ncarc[arcangle=10]{->}{j}{j+}
        \rput(16.95,2.65){\rnode{1}{$u_{2j-1}$}}
        \rput(17.2,1.3){\rnode{2}{$u_{2j}$}}
        \rput(16.95,0){\rnode{3}{$u_{2j+1}$}}
        \rput(12.6,0){\rnode{6}{$u_{2n}$}}
        \rput(12.15,1.3){\rnode{6}{$u_{0}$}}
        \rput(12.6,2.65){\rnode{6}{$u_{1}$}}
        \rput(14.6,-.3){\rnode{6}{$(c)$}}
        \cnode*(16.700,1.3){1.7pt}{i}
        \cnode*(16.172,2.67){1.7pt}{i-}
        \cnode*(16.172,0.03){1.7pt}{i+}
        \cnode*(13.046,0.03){1.7pt}{j-}
        \cnode*(12.518,1.3){1.7pt}{j}
        \cnode*(13.046,2.67){1.7pt}{j+}
        \ncarc[arcangle=10]{->}{i-}{i}
        \ncline{<-}{i}{j}
        \ncarc[arcangle=-10, linestyle=dotted,dotsep=2pt]{-}{i-}{j+}
        \ncarc[arcangle=10]{->}{j}{j+}
\end{pspicture}
\caption{The dotted arcs are not diagonals of $C$.}
\label{Fig 3}
\end{figure}
By definition, the $H_1$-path $u_{2n}u_{0}u_{2j}u_{2j-1}$ must have a diagonal between the vertices $u_{2n}$ and $u_{2j-1}$, since $2j-1$ is odd, by Theorem \ref{VH min} $ii)$, $u_{2j-1}u_{2n}$ is a diagonal of $C$, contradicting the minimality of $C$ in Theorem \ref{VH min} $iii)$, see Figure \ref{Fig 3}$(a)$ (the $H_2$-path $u_{1}u_{0}u_{2j}u_{2j+1}$ must have a diagonal between the vertices $u_{2j+1}$ and $u_{1}$, a diagonal between two vertices with odd subindices, a contradiction to the choice of $C$ in Theorem \ref{VH min} $ii)$, see Figure \ref{Fig 3}$(b)$  resp.) [the anti-directed path $u_{2j-1}u_{2j}u_{0}u_{1}$ must have a diagonal between the vertices $u_{2j-1}$ and $u_{1}$ ($u_{2j-1}\neq u_{1}$ because $D$ is $TT_3$-free), a diagonal between two vertices with odd subindices, contradicting the choice of $C$ in Theorem \ref{VH min} $ii)$, see Figure \ref{Fig 3}$(c)$ resp.]. 

Hence,  $i\neq 0$ and since, $j\ge1$, it follows that $4\le2j+2< i\le 2n$. 
If $i$ is odd, by definition, the $H_1$-path $u_{i-1}u_{i}u_{2j}u_{2j-1}$ must have a diagonal between the vertices  $u_{i-1}$ and $u_{2j-1}$ (the $H_2$-path $u_{i+1}u_{i}u_{2j}u_{2j+1}$ must have a diagonal between the vertices  $u_{2j+1}$ and $u_{i+1}$ resp.) [the anti-directed path $u_{2j-1}u_{2j}u_{i}u_{i+1}$ must have a diagonal between the vertices $u_{2j-1}$ and $u_{i+1}$ ($u_{2j-1}\neq u_{1}$ because $D$ is $TT_3$-free) resp.]. By  Theorem \ref{VH min} $ii)$, the diagonal must be  $u_{2j-1}u_{i-1}$ ($u_{2j+1}u_{i+1}$) [$u_{2j-1}u_{i+1}$ resp.] contradicting the minimality of the cycle $C$ in Theorem \ref{VH min} $iii)$.
Hence, $i$ is even. By definition, the $H_1$-path $u_{i-1}u_{i}u_{2j}u_{2j-1}$ must have a diagonal between the vertices  $u_{i-1}$ and $u_{2j-1}$ contradicting the choice of $C$ in Theorem \ref{VH min} $ii)$ because $i-1$ is odd (the $H_2$-path $u_{i+1}u_{i}u_{2j}u_{2j+1}$ must have a diagonal between the vertices  $u_{2j+1}$ and $u_{i+1}$ contradicting the choice of $C$ in Theorem \ref{VH min} $ii)$ because $i+1$ is odd or $i+1=0$ resp.)
[the anti-directed path $u_{2j-1}u_{2j}u_{i}u_{i+1}$ must have a diagonal between the vertices $u_{2j-1}$ and $u_{i+1}$. Since $i+1$ is odd or $i+1=0$, it follows that a diagonal between the vertices $u_{2j-1}$ and $u_{i+1}$ contradicts the choice of $C$ in Theorem \ref{VH min} $ii)$, unless the diagonal is the arc  $u_0u_{1}$ ($i=2n$, $i+1=0$ and $2j-1=1$), because in this case the diagonal of the anti-directed path $u_{2j-1}u_{2j}u_{i}u_{i+1}$ is not a diagonal of the directed cycle $C$. By Remark \ref{subNIC}, the odd cycle $C'=u_2u_3\dots u_{2i}u_2$ is not an induced cycle in $D$ and since any diagonal of $C'$ is a diagonal of $C$, by the choice of $C$, $C'$ has a diagonal $v'w'=u_{i'}u_{2j'}$ such that $4< 2j'+2<i'\le 2n$ or $4\le 2j'+2<i'< 2n$. Since, $v'w'$ is a diagonal of $C$, the anti-directed path $u_{2j'-1}u_{2j'}u_{i'}u_{i'+1}$ must have a diagonal between the vertices $u_{2j'-1}$ and $u_{i'+1}$ contradicting the choice of $C$ in Theorem \ref{VH min} $ii)$ because $3\le2j'-1<2n$, and  $i'+1=0$ or $i'+1$ is odd resp.].

Both cases lead to a contradiction, thus $D$ is an odd cycle.
\end{proof}

As a consequence of  Theorem \ref{T1} we have the following result.
 
\begin{teo}\label{T1 KP}
An asymmetric $3$-quasi-transitive digraph is KP if and only if it has no induced $\overrightarrow{C}_3$.
Asymmetric arc-locally in-/out-semicomplete  digraphs and $3$-anti-quasi-transitive $TT_3$-free digraphs  are KP if and only if they have no induced directed odd cycle.
\end{teo}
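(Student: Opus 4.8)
The plan is to deduce Theorem \ref{T1 KP} directly from Theorem \ref{T1}, together with the general principle recalled in the introduction that a digraph is a KP-digraph precisely when it contains no induced CKI-subdigraph (the whole digraph itself being allowed as such an induced subdigraph). So the strategy is not to re-do any structural analysis, but to package Theorem \ref{T1} into an ``if and only if'' via this principle and the observation that the classes involved are hereditary.

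First I would make the two directions of the principle explicit. For the forward implication, suppose $D$ is KP. Then $D$ has a kernel, so $D$ is not itself CKI; and by Remark \ref{subNIC}, $D$ has no \emph{proper} induced CKI-subdigraph. Hence $D$ has no induced CKI-subdigraph at all. For the backward implication I would invoke the fact, stated in the introduction, that a digraph with no induced CKI-subdigraph is KP. If a self-contained argument is preferred, one takes a smallest induced subdigraph of $D$ having no kernel: by minimality every proper induced subdigraph of it has a kernel while it has none, so it is CKI, contradicting the hypothesis.

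Next I would record that each of the four classes named in the statement --- asymmetric $3$-quasi-transitive, asymmetric arc-locally in-semicomplete, asymmetric arc-locally out-semicomplete, and asymmetric $3$-anti-quasi-transitive $TT_3$-free --- is \emph{hereditary}, i.e.\ closed under taking induced subdigraphs. This is immediate, since each defining condition (being $H_i$-free for the relevant $i$, being asymmetric, and being $TT_3$-free) is preserved under deletion of vertices. Consequently every induced CKI-subdigraph $H$ of such a $D$ again lies in the same class, so Theorem \ref{T1} applies to $H$ itself: if $D$ is asymmetric $3$-quasi-transitive then $H\cong\overrightarrow{C}_3$, whereas in each of the other three cases $H$ must be a directed odd cycle.

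Combining these pieces yields the theorem. For an asymmetric $3$-quasi-transitive $D$: by the principle $D$ is KP iff it has no induced CKI-subdigraph; by heredity and Theorem \ref{T1} the only possible such subdigraph is $\overrightarrow{C}_3$; hence $D$ is KP iff it has no induced $\overrightarrow{C}_3$. The three remaining classes are treated identically, with ``$\overrightarrow{C}_3$'' replaced by ``directed odd cycle''. I do not expect a genuine obstacle, since Theorem \ref{T1} already carries all the structural weight; the one point demanding care is the heredity check, which is exactly what guarantees that Theorem \ref{T1} may be applied to an arbitrary induced CKI-subdigraph of $D$ and not merely to $D$ itself.
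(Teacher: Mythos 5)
Your proposal is correct and takes essentially the same route as the paper: the paper presents Theorem \ref{T1 KP} as an immediate consequence of Theorem \ref{T1}, implicitly using the principle that a digraph is KP exactly when it has no induced CKI-subdigraph together with the (easy) fact that the four classes are hereditary. Your write-up merely makes these implicit steps --- the two directions of that principle and the heredity check --- explicit, which is exactly the intended argument.
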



\section{Conclusions}
For tournaments, there is a unique CKI-digraph namely the $\overrightarrow{C}_3$, and for each integer $n$ the transitive tournament of order $n$ is the unique KP-digraph.
The  families of generalized tournaments considered in this paper have a nice characterization of their CKI-digraphs due to Corollary  \ref{C semi quasi} and Theorem \ref{T in}, 
and a nice characterization of their KP-digraphs due to Theorem \ref{*KP} and Theorem \ref{T in/out KP}. 
Hence, these families of generalized tournaments somehow preserve the property of the tournaments, that the characterizations of the CKI- and the KP-digraphs are nice.
 
All bipartite tournaments are KP-digraphs, so there are no bipartite tournaments which are CKI-digraphs. 
The  families of generalized bipartite tournaments considered in this paper have a nice characterization of their CKI-digraphs due to 
Theorem \ref{T1} and a nice characterization of their KP-digraphs due to Theorem \ref{T1 KP}. 
Hence, these families of generalized bipartite tournaments somehow preserve the property of the bipartite tournaments, that there is only one class of CKI-digraphs, although not every digraph is KP.

The \emph{asymmetric part} of a digraph $D$ is the spanning subdigraph of $D$ induced by the asymmetric arcs of $D$. 
The asymmetric part of the digraphs considered in this paper is either isomorpic to $\overrightarrow{C}_7(1,2)$ or isomorphic to a cycle. 
As pointed out in \cite{galeana2016characterization}, deciding whether a digraph $D$ is isomorphic to a directed cycle or isomorphic to $\overrightarrow{C}_7(1,2)$ is polynomial, and for the case when the asymmetric part of the digraph $D$ is a cycle, deciding whether the digraph $D$ is an odd cycle or the digraph $D$ is isomorphic to $\overrightarrow{C}_n(1,\pm2,\pm3,\dots,\pm\lfloor \frac{n}{2}\rfloor)$ for some $n\ge4$ is also polynomial. Therefore, it is polynomial to determine whether a digraph is CKI if the underlying graph is a perfect graph or if the digraph is semicomplete, semicomplete multipartite, quasi-transitive, locally in-/out-semicomplete, locally  semicomplete, asymmetric $3$-quasi-transitive, asymmetric arc-locally in-/out-semicomplete and $3$-anti-quasi-transitive $TT_3$-free.

\section{Acknowledgements}
We thank the anonymous referees for their comments and suggestions that helped us to significantly improve the presentation of this paper.

\nocite{*}
\bibliographystyle{abbrvnat}
\bibliography{KP-local}
\label{sec:biblio}

\end{document}